\newcommand{\strutstretchdef}{\newcommand{\strutstretch}{\vphantom{\raisebox{1pt}{$\big($}\raisebox{-1pt}{$\big($}}}}
\theoremstyle{plain}
\newtheorem{theorem}{Theorem}[section]
\newtheorem{proposition}[theorem]{Proposition}
\newtheorem{corollary}[theorem]{Corollary}
\theoremstyle{definition}
\newtheorem{definition}[theorem]{Definition}
\newtheorem{example}[theorem]{Example}
\theoremstyle{remark}
\newtheorem{remark}[theorem]{Remark}
\numberwithin{equation}{section}
\newlength{\struh}
\newlength{\textminustop}
\DeclareMathOperator{\shift}{shift}
\DeclareMathOperator{\Exp}{Exp}
\begin{document}
\title[Moment Infinitely Divisible Weighted Shifts]{Moment Infinitely Divisible Weighted Shifts}
\author{Chafiq Benhida}
\address{UFR de Math\'{e}matiques, Universit\'{e} des Sciences et Technologies de Lille, F-59655 Villeneuve d'Ascq Cedex, France}
\email{Chafiq.Benhida@math.univ-lille1.fr}
\author{Ra\'{u}l E. Curto}
\address{Department of Mathematics, The University of Iowa, Iowa City, Iowa 52242, USA}
\email{raul-curto@uiowa.edu}
\author{George R. Exner}
\address{Department of Mathematics, Bucknell University, Lewisburg, Pennsylvania 17837, USA}
\email{exner@bucknell.edu}
\thanks{The first named author was partially supported by Labex CEMPI (ANR-11-LABX-0007-01); the second named author was partially supported by NSF Grant DMS-1302666. \ The third named author was partially supported by Labex CEMPI (ANR-11-LABX-0007-01).}
\subjclass[2010]{Primary 47B20, 47B37}
\keywords{weighted shift, subnormal, completely monotone sequence, completely alternating sequence, moment infinitely divisible}

\begin{abstract}
We say that a weighted shift $W_\alpha$ with (positive) weight sequence $\alpha: \alpha_0, \alpha_1, \ldots$ is {\it moment infinitely divisible} (MID) if, for every $t > 0$, the shift with weight sequence $\alpha^t: \alpha_0^t, \alpha_1^t, \ldots$ is subnormal. \ Assume that $W_{\alpha}$ is a contraction, i.e., $0 < \alpha_i \le 1$ for all $i \ge 0$. \ We show that such a shift $W_\alpha$ is MID if and only if the sequence $\alpha$ is log completely alternating. \  This enables the recapture or improvement of some previous results proved rather differently. \ We derive in particular new conditions sufficient for subnormality of a weighted shift, and each example contains implicitly an example or family of infinitely divisible Hankel matrices, many of which appear to be new.
\end{abstract}

\maketitle

%\documentclass[12pt]{article}%
%\usepackage{amsmath}
%\usepackage[dvips]{graphicx}
%\usepackage{amsfonts}
%\usepackage{amssymb}%
%\setcounter{MaxMatrixCols}{30}
%TCIDATA{OutputFilter=latex2.dll} %TCIDATA{Version=5.00.0.2570} %TCIDATA{CSTFile=article.cst} %TCIDATA{Created=Sat
%Jul 31 23:29:27 1999} %TCIDATA{LastRevised=Saturday, June 18, 2005 14:58:29} %TCIDATA{<META NAME="GraphicsSave"
%CONTENT="32">} %TCIDATA{<META NAME="SaveForMode" CONTENT="1">}
%\newtheorem{theorem}{Theorem}[section]
%\newtheorem{corollary}[theorem]{Corollary}
%\newtheorem{conjecture}[theorem]{Conjecture}
%\newtheorem{lemma}[theorem]{Lemma}
%\newtheorem{proposition}[theorem]{Proposition}
%\newtheorem{definition}[theorem]{Definition}
%\newtheorem{example}[theorem]{Example}
%\newtheorem{problem}[theorem]{Problem}
%\newtheorem{algorithm}[theorem]{Algorithm}
%\newtheorem{observation}[theorem]{Observation}
%\newtheorem{remark}[theorem]{Remark}
%\newenvironment{proof}[1][Proof]{\noindent\textbf{#1.}\quad }{\ \rule{.1in}{.1in}}
%\renewcommand{\theequation}{\arabic{section}.\arabic{equation}}
%\setlength{\oddsidemargin}{0in} \setlength{\topmargin}{-0.5in}
%\setlength{\textwidth}{15.5cm} \setlength{\textheight}{21.5cm}
%\setlength{\parindent}{20pt} \vskip 2cm

\section{Introduction}

Let $\mathcal{H}$ be a separable complex Hilbert space and $\mathcal{L}(\mathcal{H})$ the algebra of bounded linear operators on $\mathcal{H}$. \  An operator $T$ in $\mathcal{L}(\mathcal{H})$ is {\it normal} if it commutes with its adjoint ($T^* T = T T^*$) and {\it subnormal} if it is (unitarily equivalent to) the restriction of a normal operator to an invariant subspace (closed linear manifold). \  One standard condition for subnormality is the Bram-Halmos condition (\cite{Br}) requiring non-negativity of operator matrices:  $T$ is subnormal if and only if, for each $k = 1, 2, \ldots$, the $(k+1) \times (k+1)$ operator matrix $({T^*}^{i}T^j)_{0\leq i,j \leq k}$  is non-negative. \  (That is, $T$ is $k${\it -hyponormal} for each $k=1, 2, \ldots$.)  Alternatively, the Agler-Embry condition (\cite{Ag}) is that, for an operator $T$ which is contractive ($\|T\| \leq 1$), $T$ is subnormal if and only if, for each $n = 1, 2, \ldots$, one has $\sum_{i=0}^n (-1)^i \binom{n}{i} {T^*}^i T^i \geq 0$. \  (That is, $T$ is $n${\it -contractive} for each $n=1, 2, \ldots$.)

We consider here weighted shifts, so let  $\ell^2$ be given its standard basis $\{e_j\}_{j=0}^\infty$. \  Given a
 weight sequence $\alpha: \alpha_0,\alpha_1, \alpha_2,
\ldots,$
 we define the weighted shift $W_\alpha$ on $\ell^2$ by $W_\alpha e_j := \alpha_j e_{j+1}$ and extend by linearity. \ The moments are $\gamma_0 := 1$ and $\gamma_j :=
\prod_{i=0}^{j-1} \alpha_i^2$, $j \geq 1.$ \ For virtually all questions of interest, it is without loss of generality to assume that the $\alpha_j$ are positive (see \cite{Sh}), and we do henceforth.

The subnormality of unilateral weighted shifts can be characterized in terms of their moments, as follows: $W_{\alpha}$ is subnormal if and only if there exists a Borel probability measure $\mu$ supported on the interval $[0, \left\|W_\alpha\right\|^2]$ such that $\gamma_n = \int t^n \; d\mu(t)$ for every $n=0,1,\cdots$; this is due to C. Berger \cite[III.8.16]{Con} and was independently established by R. Gellar and L.J. Wallen \cite{GeWa}; the measure $\mu$ is said to be the Berger measure of $W_{\alpha}$. \ On the other hand, G. Exner proved that $W_{\alpha}$ is $n$-contractive if and only if $\sum_{j=0}^{n}(-1)^{j}\binom{n}{j} \gamma_{m+j} \geq 0, \hspace{.2in}m =0, 1, \ldots $ \cite{Ex}. \ As yet another moment characterization, the Bram-Halmos conditions yield, for a weighted shift, the necessary and sufficient condition that the Hankel matrices $H(n,k):=(\gamma_{n+i+j})_{i,j=0}^k$ be positive semi-definite for all $n \ge 0, \; k \ge 1$ \cite{Cu1}.

The Aluthge transform has been an object of recent study;  recall that if $T$ is any operator with $T = U|T|$ its polar decomposition,
the Aluthge transform is defined by $AT(T) := |T|^{\frac{1}{2}} U |T|^{\frac{1}{2}}$. \  Define the iterated Aluthge transform $AT^n(\cdot)$ by
$AT^{n+1}(T) = AT(AT^n(T))$. \  It is an easy computation that the Aluthge transform of a weighted shift $W_\alpha$ is a weighted shift with weight sequence $\sqrt{\alpha_0 \alpha_1}, \sqrt{\alpha_1 \alpha_2}, \ldots$. \

Closely related to the subnormality of $W_{\alpha}$ are the (possible) subnormalities of the square root shift $W_{\sqrt{\alpha}}:=\shift\;(\sqrt{\alpha _{0}},\sqrt{\alpha _{1}},\sqrt{\alpha _{2}},\cdots )$ and the Aluthge transform of $W_{\alpha}$. \ In fact, $AT(W_{\alpha})$ is the Schur product of $W_{\sqrt{\alpha}}$ and its restriction to the subspace of $\ell^2$ spanned by the basic vectors $e_1,e_2,\cdots$. \ It follows that a sufficient condition for the subnormality of both $W_{\alpha}$ and $AT(W_{\alpha})$ is the subnormality of $W_{\sqrt{\alpha}}$.

In \cite{CE}, the authors studied the connection between the subnormality of $W_{\alpha}$ and $W_{\sqrt{\alpha}}$, and in particular how the associated Berger measures $\mu$ and $\nu$, when they exist, are linked. \ This leads to the so-called Square Root Problem: \

Given a probability measure $\mu$ (supported on a compact interval in $\mathbb{R} _+$), does there exist $\nu$ satisfying
\begin{equation}    \label{eq:momentmatching}
\int t^n \, d\mu(t) = \left(\int t^n \, d\nu(t) \right)^2, \hspace{.2in} n = 0, 1, 2, \ldots  \hspace{.3in} .
\end{equation}
If $\nu$ exists, can one find it? \ A related question is: Given $0<p<1$ consider the moment matching equations
\begin{equation}
\int t^n \, d\mu(t) = \left(\int t^n \, d\nu(t) \right)^p, \hspace{.2in} n = 0, 1, 2, \ldots  \hspace{.3in} .
\end{equation}
One then tries to find one measure, given the other. \ Finally, using the setting of convolution of measures, one might ask: Given a measure $\mu$ as above and a nonnegative integer $N$, can one find a measure $\nu$ such that $\mu=(\nu)^N:=\nu \ast \nu \ast \cdots \ast \nu \; (N \textrm{times})$?

\begin{definition} \label{MIDdef}
A unilateral weighted shift $W_\alpha$ with (positive) weight sequence $\alpha: \alpha_0, \alpha_1, \ldots$ is {\it moment infinitely divisible} (MID) if, for every $t > 0$, the shift with weight sequence $\alpha^t: \alpha_0^t, \alpha_1^t, \ldots$ is subnormal.
\end{definition}

In this paper we prove that if $W_\alpha$ is a contraction then it is MID if and only if the sequence $\alpha$ is log completely alternating.

(There is a related notion of ``infinite divisibility," concerning divisibility with respect to convolution, arising in probability theory in connection with L\'evy processes; see, for example, \cite{Sa}. \ We use the language ``moment" infinite divisibility (MID) to avoid confusion with this latter concept.)

It is well known that the Schur (entry-wise) product of positive matrices is again positive, and from this and the Bram-Halmos condition (in its Hankel moment matrix formulation for weighted shifts) it is immediate that if $W_\alpha$ is a subnormal shift, then for each positive integer $m$, the shift with weight sequence $(\alpha_i^m)$ is subnormal. \  (See \cite{CuP} for another application of this approach.) \ Consistent with Definition \ref{MIDdef}, a positive matrix $(a_{i j})$ is called \textit{infinitely divisible} if, for each $p>0$, the matrix $(a_{i j}^p)$ (the ``Schur $p$-th power'') is again positive (see, for example, \cite{Bh}). \ To a certain extent, it is this fact, taken with the Bram-Halmos condition, which motivates our terminology of moment infinitely divisible weighted shift. \ In what follows, observe that when we produce an example of an infinitely divisible weighted shift or family of such shifts, we are implicitly producing an infinitely divisible Hankel matrix (of its moments) or family of such matrices, and the matrices produced seem not to be in the literature of infinitely divisible matrices.  We do not comment separately on this below.

It is an easy computation to realize that raising weights to the $t$-th power is equivalent to raising moments to the $t$-th power, and we will say as well that a weight or moment sequence is infinitely divisible with the obvious meaning. \ It is known from \cite{Ex2}, using an approach based upon completely monotone functions, that certain weighted shifts called the Agler shifts to be defined below (one of which is the familiar Bergman shift) have this property.

To describe our main result, we need to recall the definition of completely alternating sequence, which in turn requires the definition of completely monotone sequence, and we begin in fact with the definition of a completely monotone function.

\begin{definition}
A function $f: \mathbb{R}_+ \rightarrow \mathbb{R}_+ \setminus \{0\}$ is {\em completely monotone} if its derivatives alternate in sign: $f^{(2 j)} \ge 0$ for $j \geq 1$, and $f^{(2 j + 1)} \le 0$ for $j \geq 0$. \
\end{definition}

The following statements are well known:

\begin{itemize}
\item \ The function $f$ is completely monotone if and only if $f = \mathcal{L}(\mu)$ for some positive measure $\mu$, where $\mathcal{L}$ denotes the Laplace transform. \newline
\item \ If a completely monotone function $f$ interpolates the moments of a unilateral weighted shift, i.e., $f(n) = \gamma_n$ for all $n \ge 0$, then the shift is subnormal.
\end{itemize}

We pause to recall that some well known weighted shifts are MID. \ The Agler shifts $A_j$, $j = 1, 2, \ldots$, are those with weight sequence $\sqrt{\frac{n+1}{n+j}}$, $n = 0, 1, \ldots$. \  (These were used in \cite{Ag} as part of a model theory for hypercontractions.) \ Since $A_1$ is the unilateral shift, its Berger measure is $\delta_{1}$; for $j \ge 2$ the Berger measure of $A_j$ is $d \mu(t)=(j-1)(1-t)^{j-2}dt$ on $[0,1]$.

\begin{theorem} (\cite{Ex2}) \ For $j = 2, 3, \ldots$, let $A_j$ be the $j$-th Agler shift. \ Any $p$-th power transformation ($p > 0$) of $A_j$ is subnormal, as is any $m$-th iterated Aluthge transform of $A_j$.
\end{theorem}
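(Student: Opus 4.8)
The plan is to reduce every assertion to the interpolation criterion recalled above: a unilateral weighted shift is subnormal as soon as its moment sequence $(\gamma_n)$ is interpolated by a completely monotone function $f$ (that is, $f(n)=\gamma_n$ for all $n\ge 0$). First I would record the closed form of the moments of $A_j$. Since $\alpha_n^2=\frac{n+1}{n+j}$, the moments telescope to
\[
\gamma_n=\prod_{i=0}^{n-1}\frac{i+1}{i+j}=\prod_{k=1}^{j-1}\frac{k}{n+k},
\]
so that $\gamma_n=g(n)$ with $g(x):=\prod_{k=1}^{j-1}\frac{k}{x+k}$. The two facts I would lean on throughout are standard: for $a,s>0$ the function $x\mapsto(x+a)^{-s}$ is completely monotone (it equals $\frac{1}{\Gamma(s)}\int_0^\infty u^{s-1}e^{-(x+a)u}\,du$, a Laplace transform of a positive measure), and a finite product of completely monotone functions is again completely monotone.

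For the $p$-th power transformation, recall that raising the weights to the power $p$ raises the moments to the power $p$, so the resulting shift has moment sequence $\gamma_n^p=g(n)^p$. Now $g(x)^p=\bigl(\prod_{k=1}^{j-1}k\bigr)^p\prod_{k=1}^{j-1}(x+k)^{-p}$ is a positive constant times a finite product of completely monotone functions, hence completely monotone; it interpolates $\gamma_n^p$, and subnormality follows at once from the interpolation criterion.

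For the Aluthge transform I would first compute the effect on moments. Using the weight rule $\alpha_n\mapsto\sqrt{\alpha_n\alpha_{n+1}}$ and telescoping $\prod_{i=0}^{n-1}\alpha_i\alpha_{i+1}$, the transform of a shift with moments $(\gamma_n)$ has moments $\delta_n=\sqrt{\gamma_n\gamma_{n+1}}/\sqrt{\gamma_1}$. Thus if $(\gamma_n)$ is interpolated by $g$, the transformed moments are interpolated by $\tilde g(x):=\sqrt{g(x)g(x+1)}/\sqrt{g(1)}$. The crucial observation is that the class of functions of the form $C\prod_\ell(x+b_\ell)^{-r_\ell}$ with $C,b_\ell,r_\ell>0$ is closed under the passage $g\mapsto\tilde g$: each factor $(x+b_\ell)^{-r_\ell}$ contributes $(x+b_\ell)^{-r_\ell/2}(x+b_\ell+1)^{-r_\ell/2}$, which is again of the same shape, and the normalizing $1/\sqrt{g(1)}$ only adjusts the positive constant. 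Every function in this class is completely monotone by the two facts above, so $\tilde g$ interpolates the moments of $AT(A_j)$ with a completely monotone function, yielding subnormality.

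Finally, for the iterated transforms I would argue by induction that the moments of $AT^m(A_j)$ are interpolated by $\tilde g^{(m)}$, the $m$-fold application of $g\mapsto\tilde g$ starting from the base case $g(x)=\prod_{k=1}^{j-1}k/(x+k)$, which already lies in the class described above. Since the class is preserved under $g\mapsto\tilde g$ with all exponents remaining positive, each $\tilde g^{(m)}$ is completely monotone, and so every iterated Aluthge transform $AT^m(A_j)$ is subnormal. The only step demanding genuine care — and the one I would treat as the main obstacle — is establishing the moment recursion $\delta_n=\sqrt{\gamma_n\gamma_{n+1}}/\sqrt{\gamma_1}$ and verifying that the interpolating-function class is truly preserved (tracking the normalizing constant and the positivity of all exponents); the complete-monotonicity ingredients themselves are routine.
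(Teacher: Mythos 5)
Your proof is correct: the moment formula $\gamma_n=\prod_{k=1}^{j-1}\frac{k}{n+k}$, the Aluthge moment recursion $\delta_n=\sqrt{\gamma_n\gamma_{n+1}}/\sqrt{\gamma_1}$, and the closure of the class $\bigl\{C\prod_\ell(x+b_\ell)^{-r_\ell}\bigr\}$ under both $g\mapsto g^p$ and $g\mapsto\tilde g$ all check out, and a nice feature is that you never take square roots of general completely monotone functions (which need not be completely monotone): by staying inside this explicit class, the square root simply distributes over the factors.

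In terms of route, what you have written is essentially the ``monotone function theory'' proof that the paper attributes to \cite{Ex2}; the paper itself never reproduces that argument, it only cites it (remarking that such a proof offers no information about the Berger measures). The paper's own machinery re-derives the theorem quite differently: Theorem \ref{main} shows a contractive shift is MID if and only if its weight sequence is log completely alternating, and the result for the Agler shifts then follows because the weights squared $\frac{n+1}{n+j}$ are completely alternating (via the L\'evy--Khintchine representation in Remark \ref{remAgler}), hence log completely alternating; the Aluthge part is then a one-line consequence, since $\ln\sqrt{\alpha_n\alpha_{n+1}}$ is an average of shifts of a completely alternating sequence and so is again completely alternating. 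Your approach is more elementary and self-contained (explicit interpolating functions at every stage, no semigroup theory from \cite{BCR}), but it requires separate bookkeeping for each transform, with the moment recursion and class-preservation as the load-bearing steps. The paper's approach is more conceptual: once the weight-based characterization is available, both the power transforms and the iterated Aluthge transforms follow from trivial closure properties of $\mathcal{CA}$, and it yields the formally stronger conclusion that each iterated Aluthge transform $AT^m(A_j)$ is itself MID, not merely subnormal (a fact your argument could also reach, by applying your $p$-th power step to $\tilde g^{(m)}$, though you did not state it).
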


The proof (which uses monotone function theory) offers no information about the Berger measure of the resulting shift; however, it brings to the fore the significance of complete monotonicity in the study of MID shifts.

To proceed, we need to introduce the {\it difference operator} $\nabla$, acting on sequences $\varphi$:
$$\nabla^0 \varphi := \varphi, \; \; \;  \; \; \; \; (\nabla \varphi)(n) := \varphi(n) - \varphi(n+1),$$
$$\nabla^k \varphi := \nabla \nabla^{k-1} \varphi,$$
for all $k \geq 1$. \

For example,

$$(\nabla \varphi)(n)= \varphi(n)-\varphi(n+1),$$
$$(\nabla^2 \varphi)(n)= \varphi(n)-2 \varphi(n+1)+\varphi(n+2),$$
$$(\nabla^3 \varphi)(n)= \varphi(n)-3 \varphi(n+1)+3 \varphi(n+2)-\varphi(n+3),$$
and so on. \

\begin{definition}
A sequence $\varphi$ is said to be {\it completely monotone} if $(\nabla^k \varphi)(n) \geq 0$ for all $k, n \geq 0$. \
\end{definition}

\begin{remark}
Looking at the Agler-Embry conditions for subnormality, we easily see that a contractive shift $W_{\alpha}$ is subnormal if and only if $(\gamma_n)$ is completely monotone if and only if $(\nabla^k \gamma)(n) \ge 0 \; \; (\textrm{all} \; k,n \ge 0)$.
\end{remark}

\begin{definition}   \label{def:CA}
A sequence $\psi$ is said to be {\it completely alternating} if $(\nabla^k \psi)(n) \leq 0 \; \; (\textrm{for all} \; k \ge 1, n \ge 0)$; equivalently, if the sequence $-\nabla \psi$ is completely monotone. \ Similarly, given an integer $k \ge 1$, a sequence $\psi$ is said to be {\it $k$-alternating} if $(\nabla^k \psi)(n) \leq 0$ for all $n \geq 0$, and is $k$-hyperalternating if it is $m$-alternating for all $m \le k$.
\end{definition}

The set of completely alternating sequences will be denoted by $\mathcal{CA}$, and the subset of sequences in $\mathcal{CA}$ with all positive terms will be denoted by $\mathcal{CA}_+$. \ The set of positive-term sequences whose logarithms are in $\mathcal{CA}$ will be denoted by $\Exp \mathcal{CA}$, and the sequences called ``log completely alternating.''

There is a well known link between completely alternating and completely monotone sequences, which we now state.

\begin{proposition}
\ (\cite[Prop. 6.10]{BCR}) \ The sequence $\psi$ is completely alternating if and only if the sequence $\varphi_t:=e^{-t \psi}$ is completely monotone for all $t>0$.
\end{proposition}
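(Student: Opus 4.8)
The statement is an equivalence whose two implications are of very unequal difficulty, so I would treat them separately. Throughout I would use the expansion $(\nabla^k\varphi)(n)=\sum_{j=0}^{k}(-1)^j\binom{k}{j}\varphi(n+j)$, which comes from writing $\nabla=I-E$ with $E$ the forward shift.

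For the easy implication ($\varphi_t$ completely monotone for every $t>0$ $\Rightarrow$ $\psi$ completely alternating), the plan is to differentiate in $t$ at $t=0$. Fixing $k\ge 1$ and $n\ge 0$, set $g(t):=(\nabla^k\varphi_t)(n)=\sum_{j=0}^k(-1)^j\binom{k}{j}e^{-t\psi(n+j)}$. By hypothesis $g(t)\ge 0$ for all $t>0$, while $g(0)=\sum_{j=0}^k(-1)^j\binom{k}{j}=(1-1)^k=0$. Since $g$ is smooth, its right-hand derivative satisfies $g'(0)=\lim_{t\to 0^+}\frac{g(t)-g(0)}{t}\ge 0$. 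But $g'(0)=-\sum_{j=0}^k(-1)^j\binom{k}{j}\psi(n+j)=-(\nabla^k\psi)(n)$, so $(\nabla^k\psi)(n)\le 0$ for all $k\ge 1$ and $n\ge 0$, which is exactly complete alternation.

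The substantive direction is the converse, and here I would combine integral representations with closure properties of the cone of completely monotone sequences. First I would record the building blocks: for a finite positive measure $\rho$ on $[0,1]$ the sequence $a_n:=\int x^n\,d\rho(x)$ is completely monotone, since $(\nabla^k a)(n)=\int x^n(1-x)^k\,d\rho(x)\ge 0$; and the completely monotone sequences are closed under sums, under products (via the moment representation, using that $[0,1]$ is closed under multiplication), under multiplication by positive constants, and under pointwise limits. Next, since $\psi$ is completely alternating, $\beta:=-\nabla\psi$ is completely monotone, so by the Hausdorff theorem $\beta(n)=\int_{[0,1]}x^n\,d\mu(x)$ for a finite positive $\mu$; summing the relation $\psi(n+1)=\psi(n)+\beta(n)$ then yields the L\'evy--Khinchine form
$$\psi(n)=\psi(0)+bn+\int_{[0,1)}\frac{1-x^n}{1-x}\,d\mu(x),\qquad b:=\mu(\{1\})\ge 0 .$$
Consequently $e^{-t\psi(n)}$ factors as the positive constant $e^{-t\psi(0)}$ times the geometric (hence completely monotone) sequence $e^{-tbn}$ times $e^{-tI(n)}$, where $I(n):=\int_{[0,1)}\frac{1-x^n}{1-x}\,d\mu(x)$, and it remains to show that $e^{-tI}$ is completely monotone.

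The only real obstacle is that the natural L\'evy measure $\frac{t}{1-x}\,d\mu(x)$ may have infinite mass near $x=1$, so the ``compound Poisson'' identity has to be reached through a limit. I would truncate: for $\epsilon>0$ put $I_\epsilon(n):=\int_{[0,1-\epsilon]}\frac{1-x^n}{1-x}\,d\mu(x)$, so that $\rho_\epsilon:=\frac{t}{1-x}\,\mu|_{[0,1-\epsilon]}$ is a finite positive measure on $[0,1]$ and $-tI_\epsilon(n)=\int(x^n-1)\,d\rho_\epsilon(x)$. Exponentiating gives $e^{-tI_\epsilon(n)}=e^{-\|\rho_\epsilon\|}\sum_{k\ge 0}\frac{1}{k!}\big(\int x^n\,d\rho_\epsilon(x)\big)^k$, a pointwise-convergent series of products of completely monotone sequences with nonnegative coefficients, hence completely monotone by the closure properties. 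Finally, monotone convergence gives $I_\epsilon(n)\uparrow I(n)<\infty$ as $\epsilon\downarrow 0$ (the integrand is nonnegative and bounded by $n$), so $e^{-tI_\epsilon(n)}\to e^{-tI(n)}$ pointwise and the limit $e^{-tI}$ is completely monotone. Multiplying the three completely monotone factors back together shows that $\varphi_t=e^{-t\psi}$ is completely monotone for every $t>0$, which completes the proof.
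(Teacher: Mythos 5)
Your proof is correct, but note that the paper itself offers no proof of this proposition: it is quoted verbatim from Berg--Christensen--Ressel \cite[Ch.~4, Prop.~6.10]{BCR}, where it is obtained from the machinery of positive and negative definite functions on abelian semigroups (a Schoenberg-type theorem). Your argument is therefore a genuinely different and nearly self-contained route. The forward implication --- differentiating $g(t):=(\nabla^k\varphi_t)(n)$ at $t=0^+$ and using $g(0)=0$ together with $g\ge 0$ on $(0,\infty)$ --- is clean and exactly right. For the converse, you in effect re-derive the L\'evy--Khintchin representation (\ref{Levy}), which the paper quotes separately as its next proposition, by applying Hausdorff's moment theorem to the completely monotone sequence $-\nabla\psi$ and summing the telescoping relation; you then get complete monotonicity of $e^{-t\psi}$ by a compound-Poisson approximation: truncating the L\'evy measure $\frac{t}{1-x}\,d\mu(x)$ to $[0,1-\epsilon]$ makes it finite, the exponential series exhibits $e^{-tI_\epsilon}$ as a positive combination of entrywise powers of Hausdorff moment sequences, and complete monotonicity survives the pointwise limit as $\epsilon\downarrow 0$, justified by monotone convergence since $0\le (1-x^n)/(1-x)\le n$. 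The closure properties you invoke (sums, positive scalars, pointwise limits, entrywise products) are all valid and correctly used. The trade-off between the two routes: yours leans twice on Hausdorff's moment theorem (for the representation of $-\nabla\psi$ and for closure of complete monotonicity under products), whereas the cited route leans on semigroup harmonic analysis; in exchange, your argument needs no positivity assumption on $\psi$ itself (only $-\nabla\psi\ge 0$), which fits the paper's emphasis that its completely alternating sequences may have negative terms, and it makes explicit along the way the representation (\ref{Levy}) that the paper uses later in Remark \ref{remAgler}.
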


\begin{proposition} \ (\cite[Chapter 4, Prop. 6.12]{BCR}) \ The sequence $\psi$ is completely alternating if and only if it has an associated L\'evy-Khintchin measure; that is,
\begin{equation} \label{Levy}
\psi(n)=a + bn+ \int_0^1 (1-t^{n}) \; d\mu(t),
\end{equation}
where $\mu \ge 0$. \ (Following the usual convention, we declare $t^0=1$ for all $t$ and $\int_0^1 0 \; d\mu(t)=0$ for any $\mu$.)
\end{proposition}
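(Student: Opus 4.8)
The plan is to prove both implications separately, with the reverse (sufficiency) direction a routine differencing computation and the forward (necessity) direction carrying the real content. Throughout I set $\varphi := -\nabla\psi$ and recall from Definition~\ref{def:CA} that $\psi$ is completely alternating precisely when $\varphi$ is completely monotone. The representation \eqref{Levy} is the sequence analogue of the Lévy--Khintchin formula, and the engine behind it is the Hausdorff moment theorem, i.e. the discrete counterpart of the Laplace-transform representation of completely monotone functions recalled above.

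For sufficiency, suppose $\psi(n) = a + bn + \int_0^1 (1-t^n)\,d\mu(t)$ with $b \ge 0$ and $\mu \ge 0$, and compute $\nabla^k\psi$ termwise in the variable $n$. The constant $a$ is annihilated by $\nabla$; the linear term satisfies $\nabla(bn) = -b$ and $\nabla^k(bn) = 0$ for $k \ge 2$; and for the integral term the key identity is $\nabla^k[t^n] = t^n(1-t)^k$, immediate by induction from $\nabla[t^n] = t^n(1-t)$. Hence, for every $k \ge 1$,
\[
(\nabla^k\psi)(n) = -b\,\delta_{k,1} - \int_0^1 t^n(1-t)^k\,d\mu(t) \le 0,
\]
since the integrand is nonnegative on $[0,1]$, and $\psi$ is completely alternating.

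For necessity, assume $\psi$ is completely alternating, so that $\varphi = -\nabla\psi$ is completely monotone with $\varphi(n) = \psi(n+1) - \psi(n) \ge 0$; moreover $\nabla\varphi \ge 0$ forces $\varphi$ to be nonincreasing, hence $0 \le \varphi(n) \le \varphi(0)$. The crux is to represent $\varphi$ as a Hausdorff moment sequence, i.e. to produce a finite positive measure $\rho$ on $[0,1]$ with $\varphi(n) = \int_0^1 t^n\,d\rho(t)$; finiteness is automatic since $\rho([0,1]) = \varphi(0) < \infty$. I would obtain this either from the classical Hausdorff theorem or, staying within the present framework, by normalizing to $\varphi(0) = 1$ and invoking the Remark above together with Berger's theorem: a bounded completely monotone sequence is the moment sequence of a contractive subnormal shift, whose Berger measure lives on $[0,1]$. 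I then recover $\psi$ by telescoping,
\[
\psi(n) = \psi(0) + \sum_{j=0}^{n-1}\varphi(j) = \psi(0) + \int_0^1 \sum_{j=0}^{n-1} t^j\,d\rho(t),
\]
and separate the atom at $t=1$: setting $a := \psi(0)$, $b := \rho(\{1\})$, and $d\mu(t) := (1-t)^{-1}\,d\rho(t)$ on $[0,1)$, and using $\sum_{j=0}^{n-1}t^j = (1-t^n)/(1-t)$ for $t \in [0,1)$ while the sum equals $n$ at $t=1$, yields exactly $\psi(n) = a + bn + \int_0^1 (1-t^n)\,d\mu(t)$ with $\mu \ge 0$.

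The hard part is the moment representation of $\varphi$: this is where the genuine analytic content sits, ultimately a compactness/selection argument (packaged here inside Berger's theorem). A secondary point needing care is that the transformed measure $\mu$ may fail to be finite near $t = 1$ and that $\varphi$ may vanish from some index on; neither causes trouble, since $(1-t^n)\,d\mu(t) = (1 + t + \cdots + t^{n-1})\,d\rho(t)$ is dominated by $n\,d\rho$ and hence integrable, so every integral in the representation converges and the degenerate case is absorbed by a point mass of $\rho$ at the origin.
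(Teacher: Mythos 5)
Your proof is correct. Note that the paper offers no proof of this proposition at all---it is quoted directly from the reference \cite{BCR}---so the relevant comparison is with the standard textbook argument, and yours is essentially that argument: reduce to the completely monotone sequence $\varphi=-\nabla\psi$, represent it as a Hausdorff moment sequence $\varphi(n)=\int_0^1 t^n\,d\rho(t)$, recover $\psi$ by telescoping, and split off the atom of $\rho$ at $t=1$ to produce the linear term $bn$, taking $d\mu(t)=(1-t)^{-1}\,d\rho(t)$ on $[0,1)$. Two details you handled correctly are worth emphasizing. First, the statement as printed omits the requirement $b\ge 0$, which is genuinely needed for the ``if'' direction (with $b<0$ allowed, $\psi(n)=-n$ would satisfy \eqref{Levy} with $\mu=0$ yet $\nabla\psi\equiv 1>0$); your sufficiency argument assumes $b\ge 0$ and your necessity argument delivers it automatically as $b=\rho(\{1\})\ge 0$, which is the intended reading of the proposition. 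Second, the measure $\mu$ may be infinite near $t=1$, and your domination $(1-t^n)\,d\mu=(1+t+\cdots+t^{n-1})\,d\rho\le n\,d\rho$ is exactly what guarantees all the integrals in \eqref{Levy} converge. Your alternative route to the moment representation via the paper's Remark and Berger's theorem also works, and is a nice way to stay inside the paper's toolkit; the degenerate case where $\varphi$ vanishes from some index on does force $\rho=\varphi(0)\,\delta_0$ as you say, but invoking the classical Hausdorff moment theorem directly avoids the normalization and positivity caveats altogether.
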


\begin{remark} \label{remAgler}
In the specific case of the Agler shifts $A_j$, the sequence of weights squared is
$$
\frac{1}{j} + \int_0^1 (1-t^{n}) (j-1)t^{j-1} \; dt.
$$
Thus, while the Lebesgue measure $(j-1)t^{j-2} dt$ helps recover the moments of the Agler shift $A_j$, the L\'evy-Khintchin identity (\ref{Levy}) (with $b=0$) reproduces the actual weights squared. \ Note that in this (atypical) case, the L\'evy-Khintchin measure is obtained from the Berger measure by multiplying by $t$ and taking $a$ to be the zero-th weight.
\end{remark}

Completely alternating sequences have been studied extensively, but it is important to note that usually there is the assumption that the sequence is positive (the study is in the context of semigroups, in particular $\mathbb{R}_+$) while here we allow negative terms. \  Part of what separates our results from some results of Athavale and others in \cite{At}, \cite{SA}, and \cite{AtRa} is this modest change. \  It is easy to check that both $\mathcal{CA}$ and $\mathcal{CA}_+$ are closed under sums, the addition of constants, and (positive) scalar multiples, while $\mbox{\rm Exp}\,  \mathcal{CA}$ is closed under (positive) scalar multiplication, products, and positive powers.

\section{Some Auxiliary Results}

For later use, we record a few auxiliary results.

\begin{proposition}  \label{prop:log1pphi} \ (\cite[Ch. 3, Cor. 2.10]{BCR}; see also \cite[Cor. 1]{AtRa}) \
Let $\varphi \in \mathcal{CA}_+$. \ Then $\ln(1 + \varphi) \in \mathcal{CA}_+$.
\end{proposition}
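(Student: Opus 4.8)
The plan is to prove positivity and complete alternation separately, reducing the latter, via the exponential characterization in \cite[Prop. 6.10]{BCR}, to a complete monotonicity statement that I can establish by a Laplace-type integral representation. Positivity is immediate: since $\varphi \in \mathcal{CA}_+$ has strictly positive terms, $1 + \varphi(n) > 1$ and hence $\ln(1+\varphi(n)) > 0$ for every $n$.

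For complete alternation, by \cite[Prop. 6.10]{BCR} it suffices to show that $e^{-t\ln(1+\varphi)} = (1+\varphi)^{-t}$ is a completely monotone sequence for every $t > 0$. The key idea is the gamma-function identity
$$a^{-t} = \frac{1}{\Gamma(t)} \int_0^\infty s^{t-1} e^{-sa}\, ds \qquad (a > 0,\ t > 0),$$
applied pointwise with $a = 1 + \varphi(n)$, which gives
$$(1+\varphi(n))^{-t} = \frac{1}{\Gamma(t)} \int_0^\infty s^{t-1} e^{-s}\, e^{-s\varphi(n)}\, ds.$$
This exhibits $(1+\varphi)^{-t}$ as a positive superposition of the sequences $n \mapsto e^{-s\varphi(n)}$, each of which is completely monotone by \cite[Prop. 6.10]{BCR} applied to $\varphi \in \mathcal{CA}$ (here I use only that $\varphi$ is completely alternating).

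Next I would apply the difference operator. Because $(\nabla^k\psi)(n) = \sum_{i=0}^k (-1)^i \binom{k}{i}\psi(n+i)$ is a finite linear combination of point evaluations, it passes through the integral by linearity, so
$$(\nabla^k (1+\varphi)^{-t})(n) = \frac{1}{\Gamma(t)}\int_0^\infty s^{t-1}e^{-s}\,(\nabla^k e^{-s\varphi})(n)\, ds \ge 0,$$
the integrand being nonnegative for every $s > 0$. Hence $(1+\varphi)^{-t}$ is completely monotone for all $t > 0$, and the converse direction of \cite[Prop. 6.10]{BCR} yields $\ln(1+\varphi) \in \mathcal{CA}$; combined with the positivity already noted, $\ln(1+\varphi) \in \mathcal{CA}_+$.

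The only points needing care — and they are mild — are convergence of the integral and the legitimacy of the interchange. Convergence is clear since $\varphi(n) > 0$ forces $0 < e^{-s\varphi(n)} \le 1$, so the integrand is dominated by $s^{t-1}e^{-s}$, which is integrable with total mass $\Gamma(t)$; and the interchange with $\nabla^k$ requires no limiting argument, being merely linearity applied to a finite sum. Thus the main conceptual step is recognizing the gamma integral as the right device to write $(1+\,\cdot\,)^{-t}$ as an average of exponentials, after which everything reduces to the already-available exponential characterization of $\mathcal{CA}$. (Equivalently, one is observing that $x \mapsto \ln(1+x)$ is a Bernstein function, so its composition with a completely alternating sequence is again completely alternating; the integral argument above makes this self-contained.)
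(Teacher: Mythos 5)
Your proof is correct. One thing to note at the outset: the paper does not actually prove this proposition --- it is quoted verbatim from the literature (\cite[Ch.~3, Cor.~2.10]{BCR}; see also \cite[Cor.~1]{AtRa}), so there is no in-paper argument to compare against. What you have done is reconstruct, in a self-contained way, essentially the argument that underlies the cited result. Your two uses of the exponential characterization (the paper's Proposition 1.8, i.e.\ \cite[Prop.~6.10]{BCR}) are both legitimate as stated there: the forward direction gives that each $n \mapsto e^{-s\varphi(n)}$ is completely monotone, and the converse direction converts complete monotonicity of $(1+\varphi)^{-t}$ for all $t>0$ back into complete alternation of $\ln(1+\varphi)$. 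The gamma-integral subordination step is sound: the interchange with $\nabla^k$ is indeed just linearity over a finite sum, the domination by $s^{t-1}e^{-s}$ (up to a factor $2^k$) justifies finiteness of every integral involved, and positivity of the limit sequence is trivial since $1+\varphi(n)>1$. A mildly more direct variant, closer in spirit to how \cite{BCR} and \cite{AtRa} phrase it, is to use the Frullani representation $\ln(1+a)=\int_0^\infty \bigl(1-e^{-sa}\bigr)\,s^{-1}e^{-s}\,ds$, which exhibits $\ln(1+\varphi)$ itself as a positive superposition of the completely alternating sequences $n\mapsto 1-e^{-s\varphi(n)}$, needing only one direction of the exponential characterization; but this buys only economy, not correctness, and your version has the small advantage of simultaneously showing that all the sequences $(1+\varphi)^{-t}$, $t>0$, are completely monotone.
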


\begin{proposition} \label{propBCR} \ (\cite[Cor. 1]{AtRa}; cf. \cite[Ch. 3, Cor. 2.10]{BCR}) \ Let $\varphi \in \mathcal{CA}_+$. \ Then $\varphi^p \in \mathcal{CA}_+$ for all $0 < p < 1$.
\end{proposition}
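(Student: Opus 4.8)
The plan is to represent the $p$-th power through the classical subordination formula and thereby exhibit $\varphi^p$ as a positive superposition of completely alternating sequences. For $0 < p < 1$ and $\lambda > 0$ one has the identity
$$\lambda^p = \frac{p}{\Gamma(1-p)} \int_0^\infty \left(1 - e^{-s\lambda}\right) s^{-1-p} \, ds,$$
which I would verify by the substitution $u = s\lambda$ followed by a single integration by parts, the boundary terms vanishing precisely because $0 < p < 1$. Applying this with $\lambda = \varphi(n)$ gives
$$\varphi(n)^p = \frac{p}{\Gamma(1-p)} \int_0^\infty \left(1 - e^{-s\varphi(n)}\right) s^{-1-p} \, ds \qquad (n \ge 0),$$
and I would note that the integral converges for each $n$: near $s=0$ the integrand behaves like $\varphi(n)\, s^{-p}$ (integrable since $p<1$), while near $s=\infty$ it behaves like $s^{-1-p}$ (integrable since $p>0$).

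Next I would show that, for each fixed $s>0$, the sequence $n \mapsto 1 - e^{-s\varphi(n)}$ lies in $\mathcal{CA}_+$. Since $\varphi \in \mathcal{CA}_+ \subset \mathcal{CA}$, the earlier proposition (\cite[Prop. 6.10]{BCR}) gives that $e^{-s\varphi}$ is completely monotone, so that $(\nabla^k e^{-s\varphi})(n) \ge 0$ for all $k,n \ge 0$. Because $\nabla$ annihilates constants, for every $k \ge 1$ we have $(\nabla^k(1 - e^{-s\varphi}))(n) = -(\nabla^k e^{-s\varphi})(n) \le 0$, which is exactly the completely alternating condition of Definition \ref{def:CA}; positivity is immediate, as $\varphi(n)>0$ forces $1 - e^{-s\varphi(n)} > 0$. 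Hence $1 - e^{-s\varphi} \in \mathcal{CA}_+$ for every $s > 0$.

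Finally I would pass from the integrand to the integral. The cone $\mathcal{CA}$ is cut out by the inequalities $(\nabla^k\psi)(n)\le 0$ for $k\ge 1,\ n\ge 0$, each of which involves only the finitely many entries $\psi(n),\dots,\psi(n+k)$; consequently $\mathcal{CA}$ is stable under positive linear combinations and under pointwise limits. The integral above is a pointwise (in $n$) limit of Riemann sums with positive weights $s^{-1-p}\,ds$, so each such sum lies in $\mathcal{CA}$ and therefore so does the limit $\varphi^p$. Since $\varphi(n)^p > 0$ for all $n$, we conclude that $\varphi^p \in \mathcal{CA}_+$.

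The step I expect to demand the most care is this last one: justifying that the completely alternating property survives the integral superposition, i.e., that $\nabla^k$ may be interchanged with $\int_0^\infty$. Here I would rely on the finite support of each defining inequality, so that $\nabla^k$ acts entrywise under the integral sign and the only convergence needed is the absolute convergence already checked above, rather than invoking any deeper closure theorem for $\mathcal{CA}$. (An alternative route would replace the subordination formula by the L\'evy--Khintchin representation (\ref{Levy}), but the integral superposition argument seems cleaner and keeps the positivity bookkeeping transparent.)
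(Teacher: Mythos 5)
Your proof is correct, and it is worth noting at the outset that the paper itself contains no proof of this proposition: it is quoted from \cite[Cor. 1]{AtRa} and \cite[Ch. 3, Cor. 2.10]{BCR}. So your argument is a self-contained reconstruction rather than a parallel of anything in the text; what you have written is essentially the classical subordination argument underlying those cited results. The three ingredients all check out: the formula $\lambda^p = \frac{p}{\Gamma(1-p)}\int_0^\infty \left(1-e^{-s\lambda}\right)s^{-1-p}\,ds$ (your integration by parts is right, with the endpoint at $0$ needing $p<1$ and the endpoint at $\infty$ needing $p>0$); the fact that $1-e^{-s\varphi}\in\mathcal{CA}_+$ for each fixed $s>0$, which correctly uses only the forward direction of the equivalence between $\varphi\in\mathcal{CA}$ and complete monotonicity of $e^{-s\varphi}$ (the paper's quoted \cite[Prop. 6.10]{BCR}) together with the fact that $\nabla$ annihilates constants; and the passage of the inequalities under the integral. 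On that last point, the justification you give in your closing paragraph is the right one and is all that is needed: $(\nabla^k\varphi^p)(n)$ is a \emph{finite} alternating sum of the values $\varphi(n+i)^p$, each of which is an absolutely convergent integral, so linearity alone yields $(\nabla^k\varphi^p)(n)=\frac{p}{\Gamma(1-p)}\int_0^\infty (\nabla^k(1-e^{-s\varphi}))(n)\,s^{-1-p}\,ds\le 0$ for $k\ge 1$; the intermediate remark about Riemann sums and pointwise closure of $\mathcal{CA}$ is harmless but unnecessary, and for an improper integral it is actually more delicate than the linearity argument it is meant to replace. A small bonus of your route: replacing the kernel $s^{-1-p}\,ds$ by the Frullani kernel $s^{-1}e^{-s}\,ds$ in the identity $\ln(1+\lambda)=\int_0^\infty\left(1-e^{-s\lambda}\right)s^{-1}e^{-s}\,ds$ gives, by the identical argument, the companion Proposition \ref{prop:log1pphi} on $\ln(1+\varphi)$, so both of the stability results the paper quotes from the literature are unified under one template.
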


\begin{remark}
Since $(\frac{n+1}{n+2}) \in \mathcal{CA}_+$, it follows from Proposition \ref{propBCR} that $(\sqrt{\frac{n+1}{n+2}}) \in \mathcal{CA}_+$.
\end{remark}

We now prove that each positive-term completely alternating sequence is automatically log completely alternating.

\begin{proposition}  \label{CApvsLogCA}
The containment $\mathcal{CA}_+ \subseteq \mbox{\rm Exp}\,  \mathcal{CA}$ holds.
\end{proposition}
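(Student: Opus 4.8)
The plan is to derive $\ln \varphi \in \mathcal{CA}$ from $\varphi \in \mathcal{CA}_+$ by realizing $\ln \varphi$ as a pointwise limit of sequences already known to lie in $\mathcal{CA}$, using the closure properties recorded at the end of Section 1 together with Proposition \ref{propBCR}. Since $\varphi$ has positive terms, $\ln \varphi$ is well defined, and by the definition of $\Exp \mathcal{CA}$ it suffices to show $\ln \varphi \in \mathcal{CA}$. The engine of the argument is the elementary entrywise identity $\lim_{p \to 0^+} \frac{a^p - 1}{p} = \ln a$, valid for every $a > 0$; applied to each term this gives $\frac{\varphi(n)^p - 1}{p} \to (\ln \varphi)(n)$ as $p \to 0^+$, for each fixed $n \ge 0$.

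First I would check that each approximating sequence $\psi_p := \frac{1}{p}(\varphi^p - 1)$ lies in $\mathcal{CA}$ for $0 < p < 1$. By Proposition \ref{propBCR}, $\varphi^p \in \mathcal{CA}_+ \subseteq \mathcal{CA}$. Since $\mathcal{CA}$ is closed under the addition of constants, $\varphi^p - 1 \in \mathcal{CA}$, and since $\mathcal{CA}$ is closed under positive scalar multiples, multiplying by $1/p > 0$ yields $\psi_p \in \mathcal{CA}$. Thus for each $p \in (0,1)$ we have a genuine completely alternating approximant to $\ln \varphi$.

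Next I would show that $\mathcal{CA}$ is closed under pointwise limits and conclude. The key observation is that for each fixed $k \ge 1$ and $n \ge 0$, the quantity $(\nabla^k \psi)(n)$ is a fixed finite linear combination of the entries $\psi(n), \psi(n+1), \ldots, \psi(n+k)$; hence $(\nabla^k \psi_p)(n) \to (\nabla^k \ln \varphi)(n)$ as $p \to 0^+$. Because $\psi_p \in \mathcal{CA}$ we have $(\nabla^k \psi_p)(n) \le 0$ for all such $k,n$, so passing to the limit gives $(\nabla^k \ln \varphi)(n) \le 0$ for all $k \ge 1$ and $n \ge 0$. That is exactly the condition $\ln \varphi \in \mathcal{CA}$, whence $\varphi \in \Exp \mathcal{CA}$, as desired.

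The steps are all routine; the only real content is invoking Proposition \ref{propBCR} to manufacture the fractional-power approximants inside $\mathcal{CA}_+$, and the mildest technical point is interchanging the limit $p \to 0^+$ with the difference operator $\nabla^k$. This interchange is harmless precisely because $\nabla^k$ acts through only finitely many entries, so convergence of the individual terms suffices and no uniformity in $k$ or $n$ is required.
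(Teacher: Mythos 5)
Your proof is correct, but it takes a genuinely different route from the paper's. The paper argues via Proposition \ref{prop:log1pphi}: for a sequence $(\beta_n) \in \mathcal{CA}_+$ with all terms strictly greater than $1$, write $\beta_n = 1 + \alpha_n$ with $(\alpha_n) \in \mathcal{CA}_+$ and conclude $(\ln \beta_n) \in \mathcal{CA}_+$ directly; the general case is then handled by a scaling trick, using that a completely alternating sequence is nondecreasing (hence bounded away from zero), so some positive multiple $(k\delta_n)$ has all terms exceeding $1$, and subtracting the constant $\ln k$ afterwards stays inside $\mathcal{CA}$. You instead bypass Proposition \ref{prop:log1pphi} entirely: you invoke Proposition \ref{propBCR} to place the fractional powers $\varphi^p$ in $\mathcal{CA}_+$, form the approximants $\psi_p = \frac{1}{p}(\varphi^p - 1) \in \mathcal{CA}$ via the closure properties, and pass to the limit $p \to 0^+$ using $\ln a = \lim_{p \to 0^+} \frac{a^p-1}{p}$, the interchange being legitimate because each $(\nabla^k \psi)(n)$ involves only finitely many entries. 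Both arguments are sound and of comparable length. Your approach buys, as a by-product, the useful general observation that $\mathcal{CA}$ is closed under pointwise limits (a fact the paper never isolates), and it needs no positivity normalization; the paper's approach buys directness, requiring no limiting process once the BCR logarithm result is quoted. One small point in your favor: the paper's reduction quietly uses that $\ln(k\delta_n) - \ln k \in \mathcal{CA}$, i.e., closure of $\mathcal{CA}$ under addition of (possibly negative) constants, which is the same elementary closure fact you use to justify $\varphi^p - 1 \in \mathcal{CA}$, so neither route escapes it.
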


\begin{proof}
Consider first some sequence $(\beta_n)$ in $\mathcal{CA}_+$ all of whose terms are strictly greater than $1$. \  Then with $\alpha_n := \beta_n - 1$ we get from Proposition \ref{prop:log1pphi}  that $(\ln(1 + \alpha_n)) = (\ln(\beta_n))$ is in $\mathcal{CA}_+$, which is to say that $(\beta_n)$ is log completely alternating. \  Now consider some sequence $(\delta_n)$ in $\mathcal{CA}_+$;  since it is completely alternating, it is nondecreasing and is in particular bounded away from zero. \  Then there exists some $k>0$ such that $(k \delta_n)$ has all terms strictly greater than one (and of course $(k \delta_n)$ is still completely alternating). \  But then $(k \delta_n)$ is log completely alternating by what was just shown, and therefore $(\delta_n)$ is log completely alternating as well.
\end{proof}

The reverse containment does not hold: recall the Agler shifts defined above and that their sequences of weights squared are completely alternating as in Remark 1.9.

\begin{proposition}
We have $\mbox{\rm Exp}\,  \mathcal{CA} \not\subseteq \mathcal{CA}$.
\end{proposition}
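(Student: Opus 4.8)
The plan is to produce one explicit sequence that lies in $\mbox{\rm Exp}\,\mathcal{CA}$ but not in $\mathcal{CA}$, and the Agler data already in hand make this painless. I would start from $\psi(n) := \frac{n+1}{n+2}$, the sequence of squared weights of the Agler shift $A_2$, which lies in $\mathcal{CA}_+$ (recorded in the remark following Proposition \ref{propBCR}, and via the L\'evy--Khintchin representation of Remark \ref{remAgler}). Proposition \ref{CApvsLogCA} then gives $\psi \in \mbox{\rm Exp}\,\mathcal{CA}$, that is, $\ln\psi \in \mathcal{CA}$.

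Next I would exploit the closure of $\mbox{\rm Exp}\,\mathcal{CA}$ under positive powers: since $\ln(\psi^p) = p\ln\psi$ and $\mathcal{CA}$ is closed under positive scalar multiples, $\psi^p \in \mbox{\rm Exp}\,\mathcal{CA}$ for every $p > 0$. The point is that taking logarithms is insensitive to the exponent $p$ (up to the harmless factor $p$), whereas the sequence $\psi^p$ itself, for $p$ large, should cease to be completely alternating. In contrast to Proposition \ref{propBCR}, which keeps $\mathcal{CA}_+$ stable under powers $p \in (0,1)$, raising to a power $p > 1$ amplifies the increments of $\psi$ and can destroy the concavity ($2$-alternation) that every member of $\mathcal{CA}$ must satisfy.

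Concretely, it suffices to violate the second-difference inequality at $n=0$. I would compute
\[
(\nabla^2 \psi^p)(0) = \left(\tfrac12\right)^p - 2\left(\tfrac23\right)^p + \left(\tfrac34\right)^p,
\]
and observe that $\left(\tfrac34\right)^p / \left(\tfrac23\right)^p = \left(\tfrac98\right)^p \to \infty$, so the last term overtakes $2\left(\tfrac23\right)^p$ once $\left(\tfrac98\right)^p > 2$, i.e. for $p > \ln 2 / \ln(9/8) \approx 5.88$. Taking $p = 6$ makes the displayed quantity strictly positive (numerically about $0.018$), so $\psi^6$ is not even $2$-alternating and hence $\psi^6 \notin \mathcal{CA}$. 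This exhibits $\psi^6 \in \mbox{\rm Exp}\,\mathcal{CA} \setminus \mathcal{CA}$ and proves the proposition.

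I do not expect a genuine obstacle here: the only substantive observation is that a power $p > 1$ can break $2$-alternation even though it leaves the logarithm inside $\mathcal{CA}$, and the verification is a one-line estimate at a single index. (An entirely parallel argument works with $\exp(c\,\psi)$ in place of $\psi^p$ for $c$ large, since then $\ln\exp(c\,\psi) = c\,\psi \in \mathcal{CA}$ while the second difference at $0$ becomes positive for $c > 12\ln 2$.)
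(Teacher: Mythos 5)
Your proof is correct, and its skeleton is the same as the paper's: both take powers of the Bergman weight-squared sequence $\psi(n)=\frac{n+1}{n+2}$, note that $\psi\in\mathcal{CA}_+$ forces $\psi^p\in\mbox{\rm Exp}\,\mathcal{CA}$ for every $p>0$ (via Proposition \ref{CApvsLogCA} and closure of $\mathcal{CA}$ under positive scalar multiples), and then exhibit one power that falls outside $\mathcal{CA}$. The difference is in how non-membership is certified. The paper takes $p=3$ and cites a \textit{Mathematica} computation \cite{Wol}; this is the smallest integer power that fails, but (as the paper's concluding remarks record) $\psi^3$ is still $3$-alternating and only fails to be $4$-alternating, so the violation sits in fourth-order differences and is tedious to check by hand --- hence the software. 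You instead take $p=6$, where the failure already occurs in the second difference at $n=0$: indeed $(1/2)^6-2(2/3)^6+(3/4)^6\approx 0.018>0$, a one-line hand computation. That makes your counterexample fully self-contained, which is a genuine (if modest) improvement over the paper's reliance on a computer check. Two small remarks: your threshold $p>\ln 2/\ln(9/8)\approx 5.88$ is only a sufficient bound, since you discard the positive term $(1/2)^p$; in fact $p=5$ already gives $(1/2)^5-2(2/3)^5+(3/4)^5\approx 0.005>0$, consistent with the paper's observation that $\psi^5$ is not even $2$-alternating. And your parenthetical variant with $\exp(c\,\psi)$, $c>12\ln 2$, is also correct and would serve equally well.
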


\begin{proof}
The sequence $\left((\frac{n+1}{n+2})^3\right)$ is log completely alternating but not completely alternating. \  As just noted, $A_2$ -- the Bergman shift -- with weight sequence $\left(\sqrt{\frac{n+1}{n+2}}\right)$, has weights squared completely alternating. \  Therefore the weights squared are log completely alternating, so clearly any power of them is log completely alternating. \  But one may check using \cite{Wol} that $\left((\frac{n+1}{n+2})^3\right)$ is not completely alternating.
\end{proof}

\begin{remark} \label{link}
We now present a link between moments and weights, using the difference operator $\nabla$. \ Recall that $\gamma_{n+1}=\gamma_{n} \alpha_n^2$ \; (all $n \ge 0$). \ It follows that
$$
\mbox{\rm ln}\,  \gamma_{n+1} - \mbox{\rm ln}\,  \gamma_{n} = \mbox{\rm ln}\,  \alpha_n^2
$$
or, equivalently,
$$
(\nabla \mbox{\rm ln}\,  \gamma)(n)=-(\mbox{\rm ln}\,  \alpha^2)(n)=-(\nabla^0 \mbox{\rm ln}\, \alpha^2)(n)
$$
for all $n \ge 0$. \ Using mathematical induction one can then prove that
\begin{equation} \label{eq21}
(\nabla^{k+1} \mbox{\rm ln}\,  \gamma)(n)=-(\nabla^k \mbox{\rm ln}\,  \alpha^2)(n) 
\end{equation}
for all $n \ge 0$. \ It follows that if $-\mbox{\rm ln}\, \gamma$ is completely alternating, then $\mbox{\rm ln}\, \alpha^2$ is completely alternating.
\end{remark}

\section{Main Results}

The following is the main result of this paper. \  It should be noted that heretofore conditions related to subnormality of shifts have been given on the sequence of moments (e.g., Bram-Halmos and Agler-Embry), rather than on the sequence of weights.

\begin{theorem} \label{main}
Let $W_\alpha$ be a contractive weighted shift with (positive) weight sequence $\alpha = (\alpha_n)$. \ Then  $W_\alpha$ is moment infinitely divisible (MID) if and only if $(\alpha_n^2)$ is log completely alternating (equivalently, $(\alpha_n)$ is log completely alternating).
\end{theorem}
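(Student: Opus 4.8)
The plan is to reduce the MID condition to the complete monotonicity criterion for subnormality of contractions, and then to invoke the exponential characterization of completely alternating sequences. The first thing I would record is that raising weights to the $t$-th power raises moments to the $t$-th power: the moments of $W_{\alpha^t}$ are $\prod_{i=0}^{n-1}(\alpha_i^t)^2 = \gamma_n^t$. Moreover, since $0 < \alpha_i \le 1$ and $t > 0$ force $\alpha_i^t \le 1$, each $W_{\alpha^t}$ is again a contraction, so by the Remark characterizing subnormality of contractive shifts via complete monotonicity of the moments, $W_{\alpha^t}$ is subnormal if and only if $(\gamma_n^t)$ is completely monotone. Hence $W_\alpha$ is MID if and only if $(\gamma_n^t)$ is completely monotone for every $t > 0$.

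Next I would set $\psi_n := -\ln\gamma_n$. Contractivity gives $\gamma_n \le 1$, so $\psi_n \ge 0$, and one has the identity $\gamma_n^t = e^{-t\psi_n}$. Thus the family $(\gamma_n^t)_{t>0}$ is precisely the family $\varphi_t = e^{-t\psi}$ appearing in the quoted Proposition of \cite{BCR}, which yields that $(\gamma_n^t)$ is completely monotone for all $t > 0$ if and only if $\psi = (-\ln\gamma_n)$ is completely alternating. Combining this with the previous paragraph, $W_\alpha$ is MID if and only if $-\ln\gamma \in \mathcal{CA}$.

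It then remains to transfer the condition from the moments to the weights, which is where the index bookkeeping must be done carefully. By the identity $(\nabla^{k+1}\ln\gamma)(n) = -(\nabla^k\ln\alpha^2)(n)$ recorded in Remark \ref{link}, the statement $-\ln\gamma \in \mathcal{CA}$, i.e. $(\nabla^m\ln\gamma)(n) \ge 0$ for all $m \ge 1$ and $n \ge 0$, is equivalent (upon writing $m = k+1$) to $(\nabla^k\ln\alpha^2)(n) \le 0$ for all $k \ge 0$ and $n \ge 0$. The $k \ge 1$ portion of this is exactly the assertion $\ln\alpha^2 \in \mathcal{CA}$, while the $k = 0$ portion reads $\ln\alpha_n^2 \le 0$, i.e. $\alpha_n \le 1$, which holds automatically because $W_\alpha$ is a contraction. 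Therefore, for a contraction, $-\ln\gamma \in \mathcal{CA}$ is equivalent to $(\alpha_n^2)$ being log completely alternating, and the final parenthetical equivalence with $(\alpha_n)$ follows since $\ln\alpha_n^2 = 2\ln\alpha_n$ and $\mathcal{CA}$ is closed under positive scalar multiples.

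The main obstacle is conceptual rather than computational: recognizing that the MID condition, which quantifies over all powers $t$, is exactly the hypothesis of the \cite{BCR} exponential characterization once the moments are written as $\gamma_n^t = e^{-t(-\ln\gamma_n)}$. After that identification the argument is a short chain of equivalences; the only delicate point is the off-by-one shift in the difference-operator identity, together with the observation that contractivity is precisely what supplies the otherwise missing $k = 0$ inequality.
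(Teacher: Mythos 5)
Your proof is correct and follows essentially the same route as the paper's: reduce MID to complete monotonicity of $(\gamma_n^t)$ for all $t>0$ using contractivity and the Agler--Embry remark, invoke the \cite{BCR} equivalence with $\psi = -\ln\gamma$, and transfer to the weights via the identity $(\nabla^{k+1}\ln\gamma)(n) = -(\nabla^k\ln\alpha^2)(n)$. If anything, your chain-of-equivalences treatment is slightly more explicit than the paper's (which dispatches the converse as ``entirely similar''), since you isolate the $k=0$ inequality $\ln\alpha_n^2 \le 0$ and note that contractivity is exactly what supplies it.
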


\begin{proof}
First, we recall that a sequence $\psi$ is completely alternating if and only if \
\begin{equation} \label{eqexpo}
\varphi_t:=e^{-t \psi} \; \; \; \; \; \;
\end{equation}
is completely monotone for all $t>0$. \ Observe also that since $W_\alpha$ is a contraction, so is the shift obtained by raising the weights (equivalently, the moments) to the $t$-th power for any $t > 0$. Therefore, $W_{\alpha}$ is MID if and only if $\gamma^t$ is completely monotone for all $t>0$. \ Since $\gamma^t=e^{-t (-\mbox{\rm ln}\, \gamma)}$, we can let $\psi:=-\mbox{\rm ln}\, \gamma$ and $\varphi_t \equiv \gamma^t$ in (\ref{eqexpo}), and conclude that $W_{\alpha}$ is MID if and only if $-\mbox{\rm ln}\, \gamma$ is completely alternating.

($\Longrightarrow$) \ Assume that $W_{\alpha}$ is MID. \ Then $-\mbox{\rm ln}\, \gamma$ is completely alternating or, equivalently, $\nabla \mbox{\rm ln}\, \gamma$ is completely monotone (by Definition \ref{def:CA}). \ Then $(\nabla^{k+1} \mbox{\rm ln}\,  \gamma) \ge 0$ for all $k \ge 0$, and by (\ref{eq21}) we have
$$
- \nabla^k (\mbox{\rm ln}\,  \alpha^2) \ge 0
$$
for all $k \ge 0$. \ We thus have
$$
\nabla^{k} (\mbox{\rm ln}\,  \alpha^2) \le 0
$$
for all $k \ge 0$; that is,
$$
\mbox{\rm ln}\,  \alpha^2 \in \mathcal{CA}.
$$

($\Longleftarrow$) \ The proof of the converse is entirely similar.
\end{proof}

We may recapture two results obtained using interpolation of moments by completely monotone functions. \  Recall that the weighted shift denoted $S(a,b,c,d)$ (where $a, b, c, d >0$ and with $a d > b c$) has weights $\sqrt{\frac{a n + b}{c n + d}}$, and that in \cite{CPY} certain subshifts of such shifts are defined. \  Observe as well that if we throw away a finite number of terms at the beginning of a completely alternating (or log completely alternating) sequence what remains is still in the original class.  Therefore, some of the results to follow may be generalized easily to restrictions of shifts to the canonical invariant subspaces of finite co-dimension, which generalizations we leave to the reader.

\begin{corollary}[\cite{Ex2}]
The Agler shifts are all moment infinitely divisible, as are the contractive shifts $S(a,b,c,d)$  (with $a, b, c, d >0$ and $a d > b c$) and their subshifts.
\end{corollary}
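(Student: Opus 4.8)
The plan is to reduce everything to Theorem~\ref{main} together with the containment $\mathcal{CA}_+ \subseteq \Exp\mathcal{CA}$ of Proposition~\ref{CApvsLogCA}. By that theorem a contractive shift is MID exactly when its sequence of weights squared is log completely alternating, and by the containment it is enough to show that each weight-squared sequence actually lies in $\mathcal{CA}_+$. Thus the whole corollary comes down to exhibiting, for each shift in the statement, a completely alternating (and positive) weight-squared sequence.

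First I would handle the general shift $S(a,b,c,d)$, whose weights squared are $\alpha_n^2 = \frac{an+b}{cn+d}$; these are positive since $a,b,c,d>0$, and lie in $(0,1]$ by the contractivity hypothesis. The key step is to produce an explicit L\'evy-Khintchin representation of the form (\ref{Levy}). Starting from the partial-fraction identity $\frac{an+b}{cn+d} = \frac{a}{c} - \frac{ad-bc}{c}\cdot\frac{1}{cn+d}$ and the Hausdorff-moment identity $\frac{1}{cn+d} = \frac{1}{c}\int_0^1 s^n\, s^{d/c-1}\,ds$, one is led to
\[
\frac{an+b}{cn+d} = \frac{b}{d} + \int_0^1 (1-s^n)\,\frac{ad-bc}{c^2}\,s^{d/c-1}\,ds .
\]
Because $ad>bc$ and $c>0$, the measure $\frac{ad-bc}{c^2}s^{d/c-1}\,ds$ is nonnegative, so this is a genuine representation of the form (\ref{Levy}) (with the linear coefficient equal to $0$); hence $(\alpha_n^2)$ is completely alternating, and being positive it lies in $\mathcal{CA}_+$. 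Theorem~\ref{main} then gives that $S(a,b,c,d)$ is MID.

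The Agler shifts fall out as the special case $a=b=c=1$, $d=j$ (with $A_1$, the unilateral shift, trivially MID); indeed the displayed representation then collapses exactly to the one recorded in Remark~\ref{remAgler}. For the subshifts of \cite{CPY} I expect the same mechanism to apply: whatever their precise description, the relevant weight-squared sequences should again be of the rational type $\frac{a'n+b'}{c'n+d'}$ with $a'd'>b'c'$ --- for instance, passing to an arithmetic subsequence $\alpha_{mn+r}$ of the weights replaces $(a,b,c,d)$ by $(am,\,ar+b,\,cm,\,cr+d)$, for which $a'd'-b'c'=m(ad-bc)>0$ --- so they reduce directly to the case just proved. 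The main obstacle, and really the only nonroutine point, is finding the correct L\'evy-Khintchin measure in the display above and verifying that the hypothesis $ad>bc$ is precisely what makes it nonnegative; once that representation is in hand, Proposition~\ref{CApvsLogCA} and Theorem~\ref{main} do all the remaining work.
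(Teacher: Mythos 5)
Your proof is correct, and its central mechanism is genuinely different from the paper's. Both arguments share the same frame: show the weight-squared sequences lie in $\mathcal{CA}_+$, then apply Proposition \ref{CApvsLogCA} and Theorem \ref{main}. But for $S(a,b,c,d)$ the paper first scales to the case $a=c=1$, writing the shift as $S(1,s,1,t)$ with $t>s$, and then establishes by induction the closed form $\nabla^m(\alpha_n^2)=\frac{m!\,(s-t)}{(n+t)(n+t+1)\cdots(n+t+m)}\le 0$, so complete alternation is verified by computing every iterated difference exactly; the Agler shifts are disposed of separately by citing the representation in Remark \ref{remAgler}. You instead produce, for the whole family at once, an explicit L\'evy--Khintchin representation and invoke the equivalence containing (\ref{Levy}); your identity is correct, since the right-hand side of your display evaluates to $\frac{b}{d}+\frac{(ad-bc)\,n}{d(cn+d)}=\frac{an+b}{cn+d}$, and the measure $\frac{ad-bc}{c^2}\,s^{d/c-1}\,ds$ is positive and finite precisely because $ad>bc$ and $c,d>0$. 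What your route buys: uniformity (the Agler case $a=b=c=1$, $d=j$ is literally Remark \ref{remAgler}, so one computation covers everything) and an explicit representing measure, which is more structural information than the paper records. What the paper's route buys: the exact values of all differences $\nabla^m(\alpha_n^2)$ --- strictly more quantitative than a sign --- and independence from the integral-representation theorem, since only the definition of complete alternation is used. On subshifts the two treatments are equally terse: the paper asserts without proof that a subshift of $S(a,b,c,d)$ is again some $S(a',b',c',d')$ satisfying the needed conditions, and your computation $(a',b',c',d')=(am,\,ar+b,\,cm,\,cr+d)$ with $a'd'-b'c'=m(ad-bc)>0$ is precisely the verification of that assertion (contractivity of the subshift is inherited, its weights being among the original ones), so your hedge ``whatever their precise description'' is no weaker than the paper's ``it is easy to check.''
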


\begin{proof}
As noted above, the sequence of weights squared for an Agler shift is completely alternating, hence log completely alternating. \  For the second claim, we may, by scaling, reduce to the case $a = c = 1$ to consider $S(1,s,1,t)$ with $t > s$. \  One computes (using induction and the recursion relating differences of length $n+1$ to differences of length $n$)
$$\nabla^m(\alpha^2_n) = \frac{m!(s-t)}{(n+t)(n+t + 1) \cdots (n+t+m)},$$
which is negative since $t > s$. \  The sequences of weights squared is then in $\mathcal{CA}_+$ and hence $\mbox{\rm Exp}\,  \mathcal{CA}$. \  It is easy to check that a subshift of some $S(a,b,c,d)$ is merely some $S(a',b',c',d')$ satisfying the needed conditions; the desired result then follows.
\end{proof}

We now have the following consequences, which seem difficult to obtain in other ways.  Recall that for a weighted shift which is subnormal (even hyponormal) the weight sequence is nondecreasing and $\|W\| = \sup_n \alpha_n$.

\begin{corollary}
Suppose $W_\alpha$ is a contractive weighted shift.
\begin{enumerate}
\item If $W_\alpha$ is MID, let $W_{\alpha'}$ be some perturbation consisting only of decreasing the zeroth weight of $W_\alpha$ (while leaving it positive). \  Then $W_{\alpha'}$ is MID;
\item If the weighted shift with weight sequence $\frac{e^\alpha}{e^{\|W\|}} = (\frac{e^{\alpha_0}}{e^{\|W\|}}, \frac{e^{\alpha_1}}{e^{\|W\|}}, \ldots)$ is MID,  then $W_\alpha$ is MID.
\end{enumerate}
\end{corollary}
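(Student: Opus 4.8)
The plan is to reduce both statements, via Theorem~\ref{main}, to assertions about complete alternation of the logarithm of a weight sequence, and then to verify these by elementary manipulations with the difference operator $\nabla$ together with the closure properties of $\mathcal{CA}$ and the inclusion $\mathcal{CA}_+ \subseteq \Exp\mathcal{CA}$ of Proposition~\ref{CApvsLogCA}. In each case the new shift is easily seen to remain contractive, so that Theorem~\ref{main} is applicable and ``MID'' becomes synonymous with ``$(\ln\alpha_n)\in\mathcal{CA}$.''

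For part~(1), write $\psi := \ln\alpha$ and $\psi' := \ln\alpha'$. Since we only decrease the zeroth weight while keeping it positive, $W_{\alpha'}$ is again a contraction, so it suffices to show $\psi' \in \mathcal{CA}$, given $\psi \in \mathcal{CA}$. The two sequences agree except at $n = 0$, where $\psi'_0 = \psi_0 - c$ with $c := \ln\alpha_0 - \ln\alpha'_0 > 0$; hence $\psi' = \psi - c\,u$, where $u := (1,0,0,\ldots)$. The key observation is that $(\nabla^k u)(n) = \sum_{j=0}^{k} (-1)^j \binom{k}{j}\,[\,n+j=0\,]$ vanishes for $n \ge 1$ and equals $1$ for $n = 0$, for every $k \ge 0$. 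Consequently, for $k \ge 1$ and $n \ge 1$ we have $(\nabla^k \psi')(n) = (\nabla^k \psi)(n) \le 0$, while for $n = 0$ we get $(\nabla^k \psi')(0) = (\nabla^k \psi)(0) - c \le -c < 0$. Thus $\psi' \in \mathcal{CA}$, and $W_{\alpha'}$ is MID.

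For part~(2), set $\beta_n := e^{\alpha_n - \|W\|}$; since $\alpha_n \le \|W\| = \sup_n \alpha_n$, these lie in $(0,1]$, so $W_\beta$ is contractive and Theorem~\ref{main} applies to it. By hypothesis $W_\beta$ is MID, so $(\ln\beta_n) = (\alpha_n - \|W\|) \in \mathcal{CA}$. Because $\mathcal{CA}$ is closed under the addition of constants, adding back $\|W\|$ gives $(\alpha_n) \in \mathcal{CA}$, and positivity of the terms yields $(\alpha_n) \in \mathcal{CA}_+$. By Proposition~\ref{CApvsLogCA}, $(\alpha_n) \in \Exp\mathcal{CA}$, i.e.\ $(\ln\alpha_n) \in \mathcal{CA}$, so $(\alpha_n)$ is log completely alternating and Theorem~\ref{main} gives that $W_\alpha$ is MID.

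I expect no genuinely hard step here: both parts collapse to one line of bookkeeping once Theorem~\ref{main} converts MID into complete alternation of $\ln\alpha$. The only points demanding care are the short computation in part~(1) showing that $\nabla^k$ annihilates $u$ away from the origin (so that lowering a single weight can only drive the higher differences further into the nonpositive range, never out of it), and, in part~(2), correctly identifying the logarithm of the exponential weight sequence as a constant translate of $\alpha$ before chaining the closure-under-constants property with the inclusion $\mathcal{CA}_+ \subseteq \Exp\mathcal{CA}$.
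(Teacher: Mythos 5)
Your proof is correct and follows essentially the same route as the paper: reduce both parts to log complete alternation via Theorem~\ref{main}, verify directly that lowering the zeroth weight cannot violate the $\nabla^k$ inequalities, and for part~(2) combine closure of $\mathcal{CA}$ under adding constants with Proposition~\ref{CApvsLogCA}. The paper states part~(1) in one line without the computation you supply for $\nabla^k u$, and calls part~(2) ``immediate,'' so your write-up is simply a more detailed version of the same argument.
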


\begin{proof}
For the first claim, a reduction in the first weight does not destroy log completely alternating. \  The second claim is immediate from Proposition \ref{CApvsLogCA} and Theorem \ref{main}.  Observe that for the conclusion in (ii) we could assume, instead, that the weight sequence $e^\alpha = (e^{\alpha_0}, e^{\alpha_1}, \ldots)$ is completely alternating.
\end{proof}

We may improve in some ways the result of \cite[Lemma, p. 217]{CoLo}, which proves subnormality for a certain weighted shift of importance in considering Toeplitz operators (existence of non-normal, non-analytic subnormal Toeplitz operators).

\begin{corollary}   \label{cor:Toeplitz}
If $0 < p < 1$, the weighted shift with weights $\alpha_n := (1-p^{2n+2})^{\frac{1}{2}}$ for $n = 0, 1, \ldots$ is subnormal and moment infinitely divisible. \ Further, if $p_1, p_2, \ldots, p_k$ is a collection of values in $(0,1)$, the weighted shift with weights $\alpha_n := \prod_{i=1}^k(1-p_i^{2n+2})^{\frac{1}{2}}$ for $n = 0, 1, \ldots$ is subnormal and MID.
\end{corollary}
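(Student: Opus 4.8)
The plan is to reduce both assertions to Theorem~\ref{main}. Since every weight satisfies $0 < \alpha_n < 1$, the shift is automatically a contraction, and moment infinite divisibility contains subnormality as the case $t = 1$; so it suffices to show that $(\alpha_n^2)$ is log completely alternating. For the single-parameter statement this amounts to proving that the sequence $\psi$ defined by $\psi(n) := \ln(1 - p^{2n+2})$ lies in $\mathcal{CA}$.

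The main work is precisely this complete-alternation estimate. I would set $q := p^2 \in (0,1)$ and expand the logarithm as an absolutely convergent power series (valid since $0 < q^{n+1} < 1$):
$$\psi(n) = \ln(1 - q^{n+1}) = -\sum_{m=1}^\infty \frac{q^m}{m}\,(q^m)^n .$$
The key elementary computation is the action of $\nabla$ on a geometric sequence: for $r \in (0,1)$ one has $(\nabla^k r^{\cdot})(n) = r^n (1-r)^k$, immediate by induction from the recursion defining $\nabla$. Applying $\nabla^k$ termwise (legitimate, as $\nabla^k$ is a finite linear combination acting on an absolutely convergent series) gives, for every $k \ge 1$,
$$(\nabla^k \psi)(n) = -\sum_{m=1}^\infty \frac{q^m}{m}\,(q^m)^n (1 - q^m)^k \le 0,$$
since every factor is positive. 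Hence $\psi \in \mathcal{CA}$, i.e. $(\alpha_n^2)$ is log completely alternating, and Theorem~\ref{main} delivers both MID and subnormality. Equivalently, one could display the L\'evy-Khintchin form (\ref{Levy}) directly: with the positive measure $\mu := \sum_{m \ge 1} \frac{q^m}{m}\,\delta_{q^m}$ one checks $\psi(n) = \ln(1-q) + \int_0^1 (1 - t^n)\,d\mu(t)$, which is of the required shape with $a = \ln(1-q)$ and $b = 0$.

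For the product case, I would simply write $\ln(\alpha_n^2) = \sum_{i=1}^k \ln(1 - p_i^{2n+2})$. Each summand belongs to $\mathcal{CA}$ by the single-parameter case, and $\mathcal{CA}$ is closed under sums (noted in the introduction); equivalently, $\Exp \mathcal{CA}$ is closed under products. Therefore the finite sum is again in $\mathcal{CA}$, so $(\alpha_n^2)$ is log completely alternating, and Theorem~\ref{main} again finishes the argument.

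The only genuine obstacle is the single-parameter estimate that $\ln(1 - p^{2n+2}) \in \mathcal{CA}$; the entire difficulty is resolved by the power-series expansion together with the closed form for $\nabla^k$ on a geometric sequence, after which every term carries the correct sign. All the remaining steps—contractivity, the passage from MID to subnormality, and the reduction of the product case—are routine closure properties and the invocation of Theorem~\ref{main}.
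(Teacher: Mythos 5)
Your proof is correct, but it takes a genuinely different route from the paper's. The paper never touches the logarithm directly: it observes that the weights squared themselves satisfy $\alpha_n^2 = \int_0^1 (1-t^{n+1})\, d\delta_{p^2}(t)$, so $(\alpha_n^2)$ is completely alternating (a one-line L\'evy--Khintchin representation with a single Dirac mass), and then relies on Proposition \ref{CApvsLogCA} ($\mathcal{CA}_+ \subseteq \Exp \mathcal{CA}$, which in turn rests on the cited result that $\ln(1+\varphi) \in \mathcal{CA}_+$ for $\varphi \in \mathcal{CA}_+$) together with Theorem \ref{main}; the product case is dispatched by Schur products of the associated Hankel moment matrices. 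You instead prove the weaker statement that is actually needed --- $\ln(\alpha_n^2) \in \mathcal{CA}$ --- directly, by expanding $\ln(1-q^{n+1})$ as a series of geometric sequences with negative coefficients and computing $\nabla^k$ termwise; your product case then follows from additivity of $\mathcal{CA}$ rather than Schur products. What each approach buys: the paper's argument is shorter and yields the strictly stronger conclusion that the weights squared lie in $\mathcal{CA}_+$ (not merely $\Exp\mathcal{CA}$), which is relevant to the paper's later discussion of what $\mathcal{CA}_+$ membership means beyond MID; your argument is self-contained, avoids invoking Proposition \ref{CApvsLogCA} and the external BCR machinery behind it, and explicitly exhibits the L\'evy--Khintchin measure $\sum_{m\ge 1}\frac{q^m}{m}\,\delta_{q^m}$ for the log sequence, which the paper's route leaves implicit.
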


\begin{proof}
For the first claim, it is clear that the weight sequence squared is generated as $\int_0^1 (1-t^{n+1}) d \delta_{p^2}(t)$ by the measure $\delta_{p^2}$ (where $\delta_x$ is the Dirac mass at $x$), and is therefore completely alternating. \  The second claim follows easily from the first using Schur products.
\end{proof}

Here is an example for which moment infinite divisibility and subnormality are easy from this point of view but seem difficult from approaches based upon the moments;  it is based on an example in \cite[Ex. 6.24, p. 139]{BCR}.

\begin{example}
The weighted shift with weights the sequence $(\alpha_n)$ defined by
$$\alpha_n = 1 + \frac{1}{2} + \frac{1}{3} + \ldots + \frac{1}{n+1} - \ln (n+2), \hspace{.2in}n=0,1, \ldots,$$
or the shift with weights the square roots of these, is MID and subnormal. \  This is because this sequence is completely alternating (and it increases to the Euler constant $\gamma \approx 0.577\ldots $).
\end{example}

Using the definition of moment infinitely divisible, and Schur products, one may show that if $W_\alpha$ is MID then so is $AT(W_\alpha)$, but we may also prove this as follows, in part because it provides a ``local'' version of a result to come later.

\begin{corollary}
If a contractive weighted shift $W_\alpha$ is moment infinitely divisible then so is $AT(W_\alpha)$.
\end{corollary}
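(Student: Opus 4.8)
The plan is to reduce the claim entirely to Theorem \ref{main} together with the closure properties of $\Exp \mathcal{CA}$. First I would recall that $AT(W_\alpha)$ is the weighted shift whose $n$-th weight is $\beta_n := \sqrt{\alpha_n \alpha_{n+1}}$, and observe that it is again contractive: since $0 < \alpha_n \le 1$ for all $n$, we have $0 < \beta_n \le 1$. Hence $AT(W_\alpha)$ falls within the hypotheses of Theorem \ref{main}, and it suffices to prove that $(\beta_n)$ is log completely alternating, i.e., that $(\beta_n) \in \Exp \mathcal{CA}$.

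Next, since $W_\alpha$ is MID, Theorem \ref{main} gives $(\alpha_n) \in \Exp \mathcal{CA}$. I would then note that $\mathcal{CA}$ (and hence $\Exp \mathcal{CA}$) is invariant under the one-step translation $n \mapsto n+1$: this is immediate from the definition of the difference operator, since $(\nabla^k \psi)(n+1) = \big(\nabla^k (\psi(\cdot + 1))\big)(n)$, so that a translate of a completely alternating sequence is again completely alternating. Consequently the translated sequence $(\alpha_{n+1})$ also lies in $\Exp \mathcal{CA}$. Now I would invoke the closure of $\Exp \mathcal{CA}$ under products and under positive powers (recorded in the excerpt): since $(\alpha_n)$ and $(\alpha_{n+1})$ both lie in $\Exp \mathcal{CA}$, so does their product $(\alpha_n \alpha_{n+1})$, and therefore so does its square root $(\beta_n) = \big((\alpha_n \alpha_{n+1})^{1/2}\big)$. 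By Theorem \ref{main}, $AT(W_\alpha)$ is MID. The same argument can be phrased additively: with $\psi := \ln \alpha \in \mathcal{CA}$, one has $\ln \beta_n = \tfrac12\big(\psi(n) + \psi(n+1)\big)$, which lies in $\mathcal{CA}$ by closure under translation, sums, and positive scalar multiples.

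There is essentially no hard step here, which is consistent with the remark that this is meant as a clean, ``local'' alternative to the Schur-product argument. The only point requiring a word of justification is the translation-invariance of $\mathcal{CA}$ (and hence of $\Exp \mathcal{CA}$), which is not among the closure properties explicitly listed in the excerpt but follows at once from the definition of $\nabla^k$; everything else is bookkeeping with the stated closure of $\Exp \mathcal{CA}$, preceded by the contractivity check that licenses the use of Theorem \ref{main}.
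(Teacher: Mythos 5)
Your proposal is correct and is essentially the paper's own argument: the paper's proof is the one-line observation that since $(\ln \alpha_n)$ is completely alternating, so is $\bigl(\ln\sqrt{\alpha_n\alpha_{n+1}}\bigr) = \bigl(\tfrac12(\ln\alpha_n + \ln\alpha_{n+1})\bigr)$, which is exactly your additive phrasing. You have merely made explicit the supporting details (contractivity of $AT(W_\alpha)$, translation invariance of $\mathcal{CA}$, and the two applications of Theorem \ref{main}) that the paper leaves implicit.
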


\begin{proof}
Since $(\ln(\alpha_n))$ is completely alternating, clearly so is $(\ln(\sqrt{\alpha_n \alpha_{n+1}}))$.
\end{proof}

For $0 \le t \le \frac{1}{2}$, consider now the generalized mean transform of a linear operator $T$, defined as $\hat{T}(t) := 
\frac{1}{2}[\tilde T(t)+\tilde T(1-t) ]$, where $\tilde T(t):=|T|^tU|T|^{1-t}$ is the generalized Aluthge transform of $T$; when $t=0$, this is the mean transform, already considered in \cite{LLY}. \ For a weighted shift $W_\alpha$, $\widehat{W}_\alpha(t)$ turns out to be another weighted shift with weight sequence $\frac{\alpha_n^{1-t}\alpha_{n+1}^t + \alpha_n^t\alpha_{n+1}^{1-t}}{2} \; (n \ge 0)$. \  In \cite{LLY}, the authors conjecture that the mean transform of the Bergman shift is subnormal and prove that the mean transform of the Schur square of the Bergman shift (that is, the shift with weights the squares of those of the Bergman shift) is subnormal. \  Both of these results are obtained as corollaries of the following.

\begin{proposition}
Suppose $W_\alpha$ is a contractive weighted shift whose weights squared are completely alternating. \  Then both the weighted shift whose weights are $\alpha_n' := \sqrt{\frac{\alpha_n^2 + \alpha_{n+1}^2}{2}}$ and the generalized mean transform of $W_\alpha$ are (contractive and) MID and hence subnormal. \  If the weights of $W_\alpha$ are completely alternating then the generalized mean transform of $W_\alpha$ is (contractive and) MID and hence subnormal.
\end{proposition}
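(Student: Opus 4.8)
The plan is to invoke Theorem~\ref{main} throughout: since every weight sequence appearing here is bounded by $1$, each assertion ``$\cdots$ is (contractive and) MID and hence subnormal'' reduces to verifying that the relevant weight sequence lies in $\Exp\mathcal{CA}$ (equivalently, that its square does), and contractivity is checked by inspection. First I would record a normalization that unifies the two hypotheses. Under the hypothesis $(\alpha_n^2)\in\mathcal{CA}$, positivity gives $(\alpha_n^2)\in\mathcal{CA}_+$, whence $(\alpha_n)=\bigl((\alpha_n^2)^{1/2}\bigr)\in\mathcal{CA}_+$ by Proposition~\ref{propBCR} (with $p=\tfrac12$); under the hypothesis $(\alpha_n)\in\mathcal{CA}$ we have $(\alpha_n)\in\mathcal{CA}_+$ directly. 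In either case $(\alpha_n)\in\mathcal{CA}_+\subseteq\Exp\mathcal{CA}$ by Proposition~\ref{CApvsLogCA}, which is the fact I would carry into the mean-transform computation.

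For the shift with weights $\alpha_n'=\sqrt{(\alpha_n^2+\alpha_{n+1}^2)/2}$, I would argue directly on the squared weights, using the full strength of the hypothesis $(\alpha_n^2)\in\mathcal{CA}$. Since $\mathcal{CA}_+$ is closed under positive scalar multiples, under sums, and under deletion of an initial term, the sequence whose $n$-th term is $(\alpha_n')^2=\tfrac12\alpha_n^2+\tfrac12\alpha_{n+1}^2$ --- a positive combination of $(\alpha_n^2)$ and its shift --- lies in $\mathcal{CA}_+$, and hence in $\Exp\mathcal{CA}$ by Proposition~\ref{CApvsLogCA}. Theorem~\ref{main} then gives that this shift is MID, and contractivity is clear since $(\alpha_n')^2\le 1$.

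The interesting case is the generalized mean transform, with weights $\beta_n(t)=\tfrac12\bigl(\alpha_n^{1-t}\alpha_{n+1}^t+\alpha_n^t\alpha_{n+1}^{1-t}\bigr)$ for $0\le t\le\tfrac12$. Each summand is a product of positive powers of $(\alpha_n)$ and of its shift, hence lies in $\Exp\mathcal{CA}$ (which is closed under products, positive powers, and deletion of an initial term); but $\Exp\mathcal{CA}$ is \emph{not} closed under sums, so I cannot conclude termwise. The device that rescues the argument is the factorization
$$\beta_n(t)=\tfrac12\,(\alpha_n\alpha_{n+1})^t\bigl(\alpha_n^{1-2t}+\alpha_{n+1}^{1-2t}\bigr),$$
which I would verify by inspection. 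Here $\bigl((\alpha_n\alpha_{n+1})^t\bigr)\in\Exp\mathcal{CA}$ as a product-and-power of sequences in $\Exp\mathcal{CA}$. Moreover, because $0\le 1-2t\le 1$ exactly when $t\in[0,\tfrac12]$, Proposition~\ref{propBCR} gives $(\alpha_n^{1-2t})\in\mathcal{CA}_+$ (the endpoints $1-2t=1,0$ being trivial), so by closure of $\mathcal{CA}_+$ under sums and shifts, $(\alpha_n^{1-2t}+\alpha_{n+1}^{1-2t})\in\mathcal{CA}_+\subseteq\Exp\mathcal{CA}$. Since $\Exp\mathcal{CA}$ is closed under products and positive scalars, $(\beta_n(t))\in\Exp\mathcal{CA}$, and Theorem~\ref{main} yields that the generalized mean transform is MID; contractivity follows since each summand of $\beta_n(t)$ is a product of numbers in $(0,1]$.

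The main obstacle is precisely the failure of $\Exp\mathcal{CA}$ to be closed under addition, which blocks the naive ``each summand is log completely alternating, so their average is too'' reasoning. The factorization above is what I expect to be the crux: it isolates the genuinely additive part as $\alpha_n^{1-2t}+\alpha_{n+1}^{1-2t}$, a sum living in the sum-closed class $\mathcal{CA}_+$ rather than in $\Exp\mathcal{CA}$, and it is exactly here that the restriction $t\le\tfrac12$ enters, via Proposition~\ref{propBCR}, to keep the exponent $1-2t$ within $[0,1]$. I would note in passing that the $t=\tfrac12$ case recovers the Aluthge transform $\sqrt{\alpha_n\alpha_{n+1}}$ and the $t=0$ case the mean transform, consistent with the earlier corollary.
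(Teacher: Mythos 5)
Your proof is correct, and in its overall framing it matches the paper: both reduce everything to Theorem~\ref{main}, both get $(\alpha_n)\in\mathcal{CA}_+$ from $(\alpha_n^2)\in\mathcal{CA}_+$ via the $p$-th root result (Proposition~\ref{propBCR}), and both handle the shift with weights $\alpha_n'$ by observing that $\tfrac12\alpha_n^2+\tfrac12\alpha_{n+1}^2$ stays in the sum-closed, shift-closed class $\mathcal{CA}_+$. Where you genuinely diverge is the generalized mean transform. The paper disposes of it in a single clause: since the weight sequence $(\alpha_n)$ is completely alternating, ``therefore so is the weight sequence for the generalized mean transform.'' For $t>0$ that weight sequence involves the geometric-mean terms $\alpha_n^{1-t}\alpha_{n+1}^{t}$, and none of the closure properties the paper records ($\mathcal{CA}_+$ is closed under sums, constants, and positive scalars, but \emph{not} under products) justifies this assertion; as written it tacitly relies on a log-convexity-type stability of $\mathcal{CA}_+$ that is never stated or proved. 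Your factorization $\beta_n(t)=\tfrac12\,(\alpha_n\alpha_{n+1})^t\bigl(\alpha_n^{1-2t}+\alpha_{n+1}^{1-2t}\bigr)$ circumvents exactly this difficulty: the additive part is confined to the sum-closed class $\mathcal{CA}_+$ (via Proposition~\ref{propBCR} with exponent $1-2t\in[0,1]$, which is precisely where the restriction $t\le\tfrac12$ enters), the multiplicative part lives in the product-closed class $\Exp\mathcal{CA}$, and the two are combined using product-closure of $\Exp\mathcal{CA}$ together with $\mathcal{CA}_+\subseteq\Exp\mathcal{CA}$ (Proposition~\ref{CApvsLogCA}). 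What each approach buys: the paper's argument is shorter and, if its unproved assertion about geometric means is substantiated, yields the stronger conclusion that the GMT weights are themselves completely alternating rather than merely log completely alternating; your argument yields only membership in $\Exp\mathcal{CA}$, but that is all Theorem~\ref{main} requires, and it is a complete derivation from facts the paper explicitly states --- in effect it repairs the one genuinely terse step in the paper's own proof.
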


\begin{proof}
Suppose that $W_\alpha$ is a contractive weighted shift whose weights squared are completely alternating;  it is then clear that the two resulting shifts considered are contractions.  \  The squares of the $\alpha_n'$ clearly form a completely alternating (and hence log completely alternating) sequence. \  As well, since the weights squared of $W_\alpha$ are completely alternating, using the result for $p$-th roots for $p < 1$ so is the sequence of their square roots -- that is, the weight sequence for $W_\alpha$ -- and therefore so is the weight sequence for the generalized mean transform of $W_\alpha$. \  The second claim follows by the argument just used applied directly to the weight sequence for $W_\alpha$.
\end{proof}

\begin{corollary}
The mean transform of the Bergman shift and the mean transform of the Schur square of the Bergman shift are both moment infinitely divisible and hence subnormal.
\end{corollary}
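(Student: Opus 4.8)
The plan is to deduce both assertions directly from the preceding Proposition, since that result was set up precisely to deliver these two cases as corollaries. The first step is to observe that the (ordinary) mean transform is nothing but the generalized mean transform $\widehat{W}_\alpha(t)$ at the endpoint $t=0$: substituting $t=0$ into the weight formula $\frac{\alpha_n^{1-t}\alpha_{n+1}^t + \alpha_n^t\alpha_{n+1}^{1-t}}{2}$ gives $\frac{\alpha_n+\alpha_{n+1}}{2}$, which is exactly the weight sequence of the mean transform considered in \cite{LLY}. \ Thus any conclusion the Proposition reaches for the generalized mean transform applies in particular to the mean transform.

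For the Bergman shift $A_2$, recall that its weights are $\sqrt{\tfrac{n+1}{n+2}}$, so its weights squared are $\tfrac{n+1}{n+2}$, which we have already noted lie in $\mathcal{CA}_+$. \ Hence the hypothesis of the first clause of the Proposition (``weights squared are completely alternating'') is met, and that clause asserts that the generalized mean transform of $A_2$ is contractive and MID, hence subnormal. \ Specializing to $t=0$ yields the first half of the corollary.

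For the Schur square of the Bergman shift, the weight sequence is $\tfrac{n+1}{n+2}$ (the squares of the Bergman weights), and this sequence is itself completely alternating. \ I therefore invoke the \emph{second} clause of the Proposition (``if the weights of $W_\alpha$ are completely alternating''), which concludes that the generalized mean transform is contractive, MID, and subnormal; again taking $t=0$ gives the second half. \ The only point requiring care — and the nearest thing to an obstacle — is choosing the correct clause here: one cannot apply the first clause to the Schur square, because its weights squared $\left(\tfrac{n+1}{n+2}\right)^2$ need not be completely alternating (indeed the cube is shown above not to be). \ It is exactly to cover this situation that the Proposition carries its separate second clause, so the argument amounts to matching each of the two shifts with the appropriate hypothesis.
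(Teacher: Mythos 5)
Your proof is correct and takes essentially the same approach as the paper: the paper states this corollary as an immediate consequence of the preceding Proposition (which it explicitly introduces as the source of both results) and gives no separate argument, so your spelled-out derivation is exactly the intended one. In particular, your matching of the Bergman shift to the first clause (weights squared $\tfrac{n+1}{n+2}$ completely alternating) and of the Schur square to the second clause --- noting that one cannot instead invoke the first clause since $\left(\tfrac{n+1}{n+2}\right)^2$ is not known (and in fact fails) to be completely alternating --- together with the identification of the mean transform as the $t=0$ case of the generalized mean transform, is precisely how the Proposition is meant to be applied.
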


It is clear that small modifications of the Aluthge and mean transforms (an average of three weights, for example) will yield similar results, and also that iterated Aluthge and mean transforms will yield the same conclusions. \  We consider next the Ces\`aro transform of a sequence:  recall that given a sequence $(x_n)$, its Ces\`aro transform is the sequence whose $n$-th term is
$$c_n = \frac{x_0 + x_1 + \ldots x_{n}}{n+1}.$$
We define as well the geometric Ces\`aro transform as the one whose $n$-th term is
$$g_n= \sqrt[n+1]{x_0 x_1 \cdots x_{n}}.$$
Given a sequence $(x_n)$, we will occasionally write $(C(x_n))$ for its Ces\`aro transform and $(GC(x_n))$ for its geometric Ces\`aro transform.
In some sense the mean and Aluthge transforms are, for weighted shifts, ``local'' versions of these.

We have the following.
\begin{proposition} \label{prop211}
If the sequence $(x_n)$ is completely alternating then its Ces\`aro transform $(C(x_n))$ is completely alternating. \  If the sequence $(x_n)$ is log completely alternating, its geometric Ces\`aro transform $(GC(x_n))$ is log completely alternating.
\end{proposition}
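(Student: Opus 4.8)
The plan is to reduce the geometric statement to the arithmetic one and to prove the arithmetic statement by transporting the L\'evy--Khintchin representation (\ref{Levy}) through the Ces\`aro transform. For the reduction, observe that
$\ln(GC(x_n)) = \frac{1}{n+1}\sum_{k=0}^{n} \ln x_k = C(\ln x_n)$.
Thus if $(x_n)$ is log completely alternating, then $(\ln x_n) \in \mathcal{CA}$, and once the first assertion is established, $(C(\ln x_n)) = (\ln GC(x_n)) \in \mathcal{CA}$, which is precisely the statement that $(GC(x_n)) \in \Exp \mathcal{CA}$. So everything comes down to the first claim.

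For the first claim, I would write a completely alternating $(x_n)$ in its L\'evy--Khintchin form $x_n = a + bn + \int_0^1 (1-t^n)\,d\mu(t)$ with $b \ge 0$ and $\mu \ge 0$, and then apply $C$ term by term, using linearity of $C$ together with Tonelli's theorem to pull $C$ inside the integral (legitimate since $1 - t^k \ge 0$ on $[0,1]$). The constant $a$ is fixed by $C$; the linear term $bn$ is sent to $\frac{b}{2}n$, still of nonnegative slope; and the remaining term becomes $\int_0^1 (1 - s_n(t))\,d\mu(t)$, where $s_n(t) := \frac{1}{n+1}\sum_{k=0}^{n} t^k$ is the Ces\`aro average of the geometric sequence $(t^k)$.

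The main step---the only one requiring an idea rather than bookkeeping---is to recognize that each $1 - s_n(t)$ is itself completely alternating with a nonnegative representing measure. Here I would use the identity $\frac{1}{n+1} - \frac{t^{n+1}}{n+1} = \int_t^1 x^n\,dx$, which gives $s_n(t) = \frac{1}{1-t}\int_t^1 x^n\,dx$ for $t \in [0,1)$; consequently $s_n(t)$ is a Hausdorff moment sequence in $n$ (hence completely monotone), and, since $\int_t^1 \frac{dx}{1-t} = 1$, we get $1 - s_n(t) = \int_t^1 (1 - x^n)\,\frac{dx}{1-t}$, which is completely alternating. Substituting this and interchanging the order of integration over $\{0 \le t \le x \le 1\}$ yields $\int_0^1 (1-s_n(t))\,d\mu(t) = \int_0^1 (1-x^n)\,d\nu(x)$ with $d\nu(x) = \big(\int_0^x \frac{1}{1-t}\,d\mu(t)\big)\,dx \ge 0$. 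Collecting the three pieces gives $C(x_n) = a + \frac{b}{2}n + \int_0^1 (1-x^n)\,d\nu(x)$, exactly the L\'evy--Khintchin form (\ref{Levy}) of a completely alternating sequence, so $(C(x_n)) \in \mathcal{CA}$.

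I expect the representation--interchange bookkeeping to be the only delicate point: one must confirm that Tonelli applies and that the integrability passes from $\mu$ to $\nu$. This is routine, since all integrands are nonnegative and $1 - s_n(t) \le 1 - t^n$ (the terms $1-t^k$ increase in $k$), so $\int_0^1(1-s_n(t))\,d\mu(t) \le \int_0^1 (1-t^n)\,d\mu(t) < \infty$. A representation-free alternative would verify $\nabla^k s_n(t) \ge 0$ directly from the recursion for differences, but the integral identity above makes the complete monotonicity of $s_n(t)$ transparent and is the cleaner path.
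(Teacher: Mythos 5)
Your proof is correct, but it follows a genuinely different route from the paper's. The paper argues combinatorially: writing $D(m,j)$ and $C(m,j)$ for the order-$m$ differences of $(x_n)$ and of its Ces\`aro transform starting at position $j$, it establishes by induction (via the recursion $D(m+1,j)=D(m,j)-D(m,j+1)$ and binomial identities) the closed formula
$$C(m,j) = \frac{m!\,j!}{(m+j+1)!}\sum_{k=0}^j \binom{m+k}{m} D(m,k),$$
so that non-positivity of the $D(m,k)$ is inherited by the $C(m,j)$. You instead transport the L\'evy--Khintchin representation through $C$, with the key observation that $s_n(t)=\frac{1}{1-t}\int_t^1 x^n\,dx$ exhibits the Ces\`aro average of $(t^k)_k$ as a Hausdorff moment sequence, and then Tonelli produces the new representing measure $d\nu(x)=\bigl(\int_0^x \frac{d\mu(t)}{1-t}\bigr)dx$ together with the halved linear term. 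Your reduction of the geometric statement to the arithmetic one ($\ln GC(x_n) = C(\ln x_n)$) is exactly the paper's, and your integrability checks ($1-s_n(t)\le 1-t^n$, finiteness of the density for $x<1$) are the right ones. The trade-off is worth noting: the paper's identity is elementary (no representation theorem, no measure theory) and is \emph{local in the order} $m$ --- it shows that $m$-alternating sequences have $m$-alternating Ces\`aro transforms for each fixed $m$, a finite-order refinement relevant to the paper's discussion of $k$-alternating sequences, which your argument cannot give since invoking the representation requires complete alternation in full. Conversely, your argument yields strictly more structural output: the explicit L\'evy--Khintchin measure of $C(x_n)$, in the spirit of the paper's Remark 1.9 and its closing remarks on the difficulty of producing explicit measures.
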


\begin{proof}
The second assertion follows easily from the first. \  For the first, let $D(m,j)$ denote the difference of order $m$ for the original sequence $(x_n)$ beginning at position $j$:
$$D(m,j) := \sum_{i=0}^m (-1)^i \binom{m}{i}x_{j+i}, \hspace{.2in}m = 1, 2, \ldots; j = 0, 1, \ldots .$$
Observe that since the original sequence is completely alternating, each of these is non-positive.
Similarly, let $C(m,j)$ denote the differences for the Ces\`aro transform:
$$C(m,j) := \sum_{i=0}^m (-1)^i \binom{m}{i}c_{j+i}, \hspace{.2in}m = 1, 2, \ldots; j = 0, 1, \ldots .$$
One may show by induction, and using the recursive relationships $D(m+1,j) = D(m,j)-D(m,j+1)$, their $C(m,j)$ equivalents, and some elementary relationships among binomial coefficients, that
$$C(m,j) = \frac{m!j!}{(m+j+1)!}\sum_{k=0}^j \binom{m+k}{m} D(m,k),$$
and therefore that the $C(m,j)$ are non-positive, as required.
\end{proof}

We leave to the interested reader the proof of the analogous stability results for the sequences defined by
$$
c_n^k:=\frac{x_n+x_{n+1}+ \cdots +x_{n+k}}{k+1}
$$
and
$$
g_n^k:=\sqrt[k+1]{x_n+x_{n+1}+ \cdots +x_{n+k}},
$$
for a fixed integer $k \ge 1$.

We have the following corollaries of Proposition \ref{prop211}.
\begin{corollary}
If $W_\alpha$ is a contractive weighted shift whose weights squared are completely alternating, then both the shift with weights $\delta_n = \sqrt{C(\alpha_n^2))}$ and the shift with weights $C(\alpha_n)$ have completely alternating weights, are contractive,  and are thus MID and subnormal. \    If $W_\alpha$ is a contractive weighted shift whose weights squared are log completely alternating (equivalently, its weights are log completely alternating) then the weighted shift $W_{GC(\alpha_n)}$ has weights log completely alternating and is thus MID and subnormal.
\end{corollary}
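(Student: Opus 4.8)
The plan is to read off all of the assertions as direct consequences of the two stability statements in Proposition~\ref{prop211}, combined with the $p$-th power stability of Proposition~\ref{propBCR}, the containment $\mathcal{CA}_+ \subseteq \Exp \mathcal{CA}$ of Proposition~\ref{CApvsLogCA}, and the MID characterization of Theorem~\ref{main}. Since the only operations involved are Ces\`aro transforms, square roots, and the passage between weights and weights squared, no new estimate is needed; the work is entirely in routing the hypotheses through the right results in the right order.

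First I would treat the shift with weights $\delta_n = \sqrt{C(\alpha_n^2)}$. Its weights squared are exactly $C(\alpha_n^2)$, the Ces\`aro transform of the completely alternating sequence $(\alpha_n^2)$, so by Proposition~\ref{prop211} they lie in $\mathcal{CA}_+$; applying Proposition~\ref{propBCR} with $p = \frac{1}{2}$ then shows $\delta_n$ itself is in $\mathcal{CA}_+$. For the shift with weights $C(\alpha_n)$ I would first use Proposition~\ref{propBCR} (again with $p = \frac{1}{2}$) to deduce from $(\alpha_n^2) \in \mathcal{CA}_+$ that the weights $(\alpha_n)$ are themselves completely alternating, and then invoke Proposition~\ref{prop211} to conclude $C(\alpha_n) \in \mathcal{CA}_+$. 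In both cases the weights thus lie in $\mathcal{CA}_+ \subseteq \Exp \mathcal{CA}$, so Theorem~\ref{main} gives MID and hence (taking $t = 1$) subnormality; contractivity is immediate because an arithmetic mean of numbers in $(0,1]$ stays in $(0,1]$ and a square root preserves this.

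For the final, log-completely-alternating assertion, I would note that the hypothesis that $(\alpha_n^2)$ is log completely alternating is equivalent to $(\alpha_n)$ being log completely alternating (this is the parenthetical equivalence already recorded in Theorem~\ref{main}). That places $(\alpha_n)$ squarely in the hypothesis of the second half of Proposition~\ref{prop211}, which yields $GC(\alpha_n) \in \Exp \mathcal{CA}$; contractivity again follows since a geometric mean of numbers in $(0,1]$ lies in $(0,1]$, and Theorem~\ref{main} then delivers MID and subnormality.

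The one point I would flag as requiring care --- the ``main obstacle,'' such as it is --- is the bookkeeping of the squaring. The hypothesis is stated in terms of weights squared, yet the Ces\`aro transform is applied to $(\alpha_n^2)$ for the $\delta_n$ shift but to $(\alpha_n)$ for the $C(\alpha_n)$ shift; one therefore has to insert Proposition~\ref{propBCR} at the correct moment and in the correct direction ($(\alpha_n^2) \mapsto \alpha_n$ in one place, $C(\alpha_n^2) \mapsto \delta_n$ in the other). Once that is tracked, everything else is a routine chaining of the quoted results.
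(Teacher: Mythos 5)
Your proposal is correct and follows essentially the same route as the paper, whose proof simply declares everything straightforward given the $p$-th power stability fact (Proposition \ref{propBCR}); your write-up just makes explicit the chaining through Proposition \ref{prop211}, Proposition \ref{CApvsLogCA}, and Theorem \ref{main} that the paper leaves implicit. The careful bookkeeping of where the squaring and the $p=\frac{1}{2}$ power enter is exactly the point the paper's one-line proof gestures at.
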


\begin{proof}
All is straightforward, keeping in mind that if a sequence is completely alternating so are its $p$-th power sequences for $p < 1$.
\end{proof}

Let $\Gamma(z)$ denote the classical Gamma function with $\Gamma'$ its derivative, and let $\gamma$ denote the Euler constant $\gamma \approx .577\ldots$.

\begin{corollary}
The weighted shift with weight sequence given by
$$\alpha_n = \sqrt{\frac{2 - \gamma + n - \frac{\Gamma'(3+n)}{\Gamma(3+n)}}{n+1}}, \hspace{.2in}n =0, 1, \ldots,$$
and as well the weighted shift with weight sequence given by the $\alpha_n^2$, are each moment infinitely divisible and hence subnormal.
\end{corollary}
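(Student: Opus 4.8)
The plan is to recognize the given weights-squared sequence as the Ces\`aro transform of a familiar completely alternating sequence, and then to invoke Proposition \ref{prop211} together with Theorem \ref{main}. The first task is to unravel the digamma expression. Writing $\psi := \Gamma'/\Gamma$ for the digamma function and using the standard values $\psi(m+1) = -\gamma + \sum_{k=1}^{m}\frac{1}{k}$, I would compute $\frac{\Gamma'(3+n)}{\Gamma(3+n)} = \psi(n+3) = -\gamma + \sum_{k=1}^{n+2}\frac{1}{k}$. Substituting this into the numerator collapses the two occurrences of $\gamma$ and yields
\[
2 - \gamma + n - \frac{\Gamma'(3+n)}{\Gamma(3+n)} = (n+2) - \sum_{k=1}^{n+2}\frac{1}{k},
\]
so that $\alpha_n^2 = \frac{1}{n+1}\bigl((n+2) - \sum_{k=1}^{n+2}\frac{1}{k}\bigr)$.

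The second step is to exhibit the bracketed quantity as a partial sum. A direct computation gives $\sum_{j=0}^{n}\frac{j+1}{j+2} = (n+1) - \sum_{j=0}^{n}\frac{1}{j+2} = (n+2) - \sum_{k=1}^{n+2}\frac{1}{k}$, which is exactly the numerator above. Hence $\alpha_n^2$ is precisely the $n$-th term of the Ces\`aro transform of the sequence $x_n := \frac{n+1}{n+2}$; that is, $\alpha_n^2 = C(x_n)$. Since each $x_n < 1$, the Ces\`aro averages satisfy $\alpha_n^2 < 1$, so both shifts under consideration are contractions, and a quick numerical check (e.g. $\alpha_0^2 = \frac12$, $\alpha_1^2 = \frac{7}{12}$) confirms the identification.

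The third step applies the machinery already in place. The sequence $\left(\frac{n+1}{n+2}\right)$ lies in $\mathcal{CA}_+$ (as noted in the remark following Proposition \ref{propBCR}), so by Proposition \ref{prop211} its Ces\`aro transform $(\alpha_n^2)$ is again completely alternating, and being positive it lies in $\mathcal{CA}_+$. By Proposition \ref{CApvsLogCA} we then have $(\alpha_n^2) \in \Exp \mathcal{CA}$, i.e.\ $(\alpha_n^2)$ is log completely alternating, and Theorem \ref{main} shows the first shift is MID and hence subnormal. For the second shift, whose weight sequence is $(\alpha_n^2)$, Theorem \ref{main} characterizes MID precisely by the log complete alternation of $(\alpha_n^2)$, which is exactly what we have just established; hence it too is MID and subnormal.

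The only genuine obstacle is the first step: seeing through the digamma notation to the harmonic-sum form and then recognizing the result as a Ces\`aro average of $\left(\frac{n+1}{n+2}\right)$. Once that identification is made, everything reduces to the stability result for Ces\`aro transforms (Proposition \ref{prop211}), the embedding $\mathcal{CA}_+ \subseteq \Exp\mathcal{CA}$ (Proposition \ref{CApvsLogCA}), and the main theorem, with no further analytic work required.
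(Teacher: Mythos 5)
Your proposal is correct and takes essentially the same route as the paper: the paper's proof consists precisely of the observation that $(\alpha_n^2)$ is the Ces\`aro transform of the Bergman shift's weights squared (left as ``a computation''), followed by the Ces\`aro stability result and the main theorem. Your write-up simply supplies the digamma/harmonic-sum computation that the paper omits, and correctly handles contractivity and the second shift via Proposition \ref{CApvsLogCA} and Theorem \ref{main}.
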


\begin{proof}
The $\alpha_n^2$ sequence is the Ces\`aro transform of the weights squared sequence for the Bergman shift, as is shown by a computation.
\end{proof}

We remark that there are similar results obtainable by the Ces\`aro transform of the other Agler shifts. \  Note that in the following two propositions, the exponent ``$\gamma_n-1$'' ensures that a sequence has zeroth term $1$, as is required for a shift moment sequence.

\begin{proposition}  \label{prop:epowergamma}
If $(\gamma_n)$ is a positive completely monotone sequence with $\gamma_0 = 1$ then so is $(e^{\gamma_n -1})$, and therefore the shift with these moments is subnormal.
\end{proposition}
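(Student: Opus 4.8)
The plan is to show that $(e^{\gamma_n-1})$ is again a positive completely monotone sequence with zeroth term $1$, and then to invoke the Hausdorff moment characterization of contractive subnormal shifts. Since $e^{\gamma_n-1}=e^{-1}e^{\gamma_n}$ and complete monotonicity is preserved under multiplication by a positive constant, the whole problem reduces to proving that $(e^{\gamma_n})$ is completely monotone.

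For that I would exploit the exponential characterization already recorded in the excerpt. The key observation is a sign flip: if $(\gamma_n)$ is completely monotone, so that $(\nabla^k \gamma)(n)\ge 0$ for all $k,n\ge 0$, then $-\gamma$ satisfies $(\nabla^k(-\gamma))(n)=-(\nabla^k \gamma)(n)\le 0$ for all $k\ge 1$, i.e. $-\gamma$ is completely alternating in the extended (possibly negative) sense the paper permits. Applying the cited result of \cite{BCR} that a sequence $\psi$ is completely alternating if and only if $e^{-t\psi}$ is completely monotone for every $t>0$, with $\psi:=-\gamma$ and $t=1$, yields that $(e^{\gamma_n})$ is completely monotone. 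Multiplying by the positive constant $e^{-1}$ shows $(e^{\gamma_n-1})$ is completely monotone and manifestly positive, and its zeroth term is $e^{\gamma_0-1}=e^0=1$.

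Finally, a completely monotone sequence with zeroth term $1$ is a Hausdorff moment sequence, i.e. the moment sequence of a probability measure on $[0,1]$; equivalently, by the Remark characterizing subnormality of a contractive shift through complete monotonicity of its moments, it is the moment sequence of a contractive subnormal weighted shift. Hence the shift with moments $(e^{\gamma_n-1})$ is subnormal.

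The only step requiring genuine care is the legitimacy of treating $-\gamma$, a sequence of negative terms, as completely alternating and feeding it into the exponential characterization; this is exactly the extension beyond the classical positive setting that the paper explicitly permits, so it is available. Should one prefer to avoid negative-term sequences altogether, the same conclusion follows by a self-contained argument: each power $(\gamma_n^k)$ is completely monotone (products of completely monotone sequences are completely monotone, since the pointwise product of Hausdorff moment sequences is again one), so each partial sum $\sum_{k=0}^N \gamma_n^k/k!$ is completely monotone; since $0<\gamma_n\le \gamma_0=1$ these converge to $e^{\gamma_n}$, and because each difference $(\nabla^m\,\varphi)(n)$ is a fixed finite linear combination of finitely many terms, the inequalities $(\nabla^m e^{\gamma})(n)\ge 0$ pass to the limit.
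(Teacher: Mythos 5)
Your proof is correct, and your primary argument takes a genuinely different route from the paper's. The paper's own proof is, in effect, your fallback paragraph: expand in the power series for $e^x$ and use that completely monotone sequences are closed under positive integer powers (Schur/Hausdorff products), nonnegative combinations, and pointwise limits (legitimate because each difference $(\nabla^m \varphi)(n)$ involves only finitely many terms). Your main route instead observes that complete monotonicity of $\gamma$ makes $-\gamma$ completely alternating in the paper's extended (sign-unrestricted) sense, and then applies the Schoenberg-type exponential characterization, Proposition 1.7 (quoted from \cite[Prop.~6.10]{BCR}), at $t=1$. Two remarks. First, you can sidestep the one delicate point you flag---feeding a negative-term sequence into that characterization---by taking $\psi := 1-\gamma$ instead of $-\gamma$: adding a constant changes no difference of order $k \ge 1$, and $1-\gamma_n \ge 0$ because a completely monotone sequence is nonincreasing with $\gamma_0=1$; then $e^{-\psi}=e^{\gamma-1}$ directly, with no need for the $e^{-1}$ rescaling. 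Second, your route proves strictly more than the statement: the characterization gives complete monotonicity of $e^{-t(1-\gamma)}=\left(e^{\gamma-1}\right)^t$ for \emph{every} $t>0$, so from the hypothesis that $(\gamma_n)$ is merely completely monotone you conclude that $(e^{\gamma_n-1})$ is infinitely divisible; this subsumes both Proposition \ref{prop:epowergamma} and the proposition immediately following it in the paper, where infinite divisibility of $(e^{\gamma_n-1})$ is deduced from the stronger hypothesis that $(\gamma_n)$ itself is infinitely divisible. The trade-off is that the paper's power-series argument is elementary and self-contained, while yours leans on the \cite{BCR} machinery; in exchange it is shorter and yields the stronger, infinitely divisible, conclusion for free.
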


\begin{proof}
Consider some one of the expressions to be tested for $m$-monotonicity (that is, some one of the $(\nabla^m \gamma)(n))$, expand using the customary power series for $e^x$, and use that the set of completely monotone sequences is closed under the taking of positive integer powers.
\end{proof}

Similarly, we obtain

\begin{proposition}
If $(\gamma_n)$ is a positive infinitely divisible sequence with $\gamma_0 = 1$ then so is $(e^{\gamma_n-1})$, and so the shift with these moments is moment infinitely divisible.
\end{proposition}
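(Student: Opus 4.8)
The plan is to mirror the proof of Proposition~\ref{prop:epowergamma}, now keeping track of the power $t$. First I would note that an infinitely divisible moment sequence is in particular completely monotone (take the power $t=1$), and since $\gamma_0=1$ while complete monotonicity forces $(\nabla\gamma)(n)=\gamma_n-\gamma_{n+1}\ge 0$, the sequence $(\gamma_n)$ is nonincreasing, so $0<\gamma_n\le 1$ for all $n$. Because the completely monotone sequences form a cone closed under Schur (termwise) products --- as one sees either from the Schur product theorem applied to the associated positive semi-definite Hankel moment matrices $H(n,k)$, or by convolving the representing measures on $[0,1]$ --- every integer Schur power $(\gamma_n^{\,j})$ is again completely monotone.

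Next I would fix $t>0$ and expand the $t$-th Schur power of the candidate sequence $\delta_n:=e^{\gamma_n-1}$:
\[
\delta_n^{\,t}=e^{-t}\,e^{t\gamma_n}=e^{-t}\sum_{j=0}^{\infty}\frac{t^{\,j}}{j!}\,\gamma_n^{\,j}.
\]
Since $0<\gamma_n\le 1$, this series converges absolutely. A difference $\nabla^k$ is a fixed finite linear combination of the entries at positions $n,\dots,n+k$, so it may be applied term by term, the interchange being legitimate by absolute convergence:
\[
(\nabla^k\delta^{\,t})(n)=e^{-t}\sum_{j=0}^{\infty}\frac{t^{\,j}}{j!}\,(\nabla^k\gamma^{\,j})(n)\ge 0,
\]
because each $(\nabla^k\gamma^{\,j})(n)\ge 0$ by complete monotonicity of $(\gamma_n^{\,j})$ and the coefficients $e^{-t}t^{\,j}/j!$ are nonnegative. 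Hence $(\delta_n^{\,t})$ is completely monotone for every $t>0$.

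Since $\delta_0=e^{\gamma_0-1}=1$, the sequence $(\delta_n)$ is a legitimate moment sequence with zeroth term $1$, and the associated shift is contractive (its moments are nonincreasing and bounded by $1$). Complete monotonicity of the $t$-th Schur power for every $t>0$ is exactly infinite divisibility of $(\delta_n)$, and for a contraction this says each power shift is subnormal; thus the shift with moments $(e^{\gamma_n-1})$ is MID, as claimed.

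I do not expect a genuine obstacle: the only points needing care are the product-closure of completely monotone sequences and the term-by-term differencing, both routine given $0<\gamma_n\le 1$. In fact one can bypass the series entirely. Writing $\psi_n:=-\ln(e^{\gamma_n-1})=1-\gamma_n$, for $k\ge 1$ we have $(\nabla^k\psi)(n)=-(\nabla^k\gamma)(n)\le 0$ since $(\gamma_n)$ is completely monotone, so $\psi\in\mathcal{CA}$; the characterization that $e^{-t\psi}$ is completely monotone for all $t>0$ (\cite[Prop.~6.10]{BCR}) then gives $(\delta_n^{\,t})=(e^{-t\psi_n})$ completely monotone for every $t>0$ directly, exactly in the spirit of the proof of Theorem~\ref{main}.
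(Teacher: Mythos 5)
Your main argument is essentially the paper's own: the paper gives no separate proof of this proposition beyond the phrase ``Similarly, we obtain,'' deferring to the proof of Proposition~\ref{prop:epowergamma}, which is precisely your power-series expansion of $e^{t(\gamma_n-1)}$ combined with closure of completely monotone sequences under Schur (positive integer) powers; you have simply made explicit the routine details (absolute convergence, term-by-term differencing, contractivity). Two additions are worth noting. First, your proof never uses the full infinite-divisibility hypothesis on $(\gamma_n)$, only its complete monotonicity (the case $t=1$), so you have in fact established the formally stronger statement that $(e^{\gamma_n-1})$ is infinitely divisible whenever $(\gamma_n)$ is merely completely monotone with $\gamma_0=1$. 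Second, your closing alternative --- setting $\psi_n:=1-\gamma_n$, observing $\psi\in\mathcal{CA}$ since $\nabla^k\psi=-\nabla^k\gamma\le 0$ for $k\ge 1$, and invoking the characterization that $e^{-t\psi}$ is completely monotone for all $t>0$ --- is a genuinely different and cleaner route, exactly the mechanism behind Theorem~\ref{main}; it trades the elementary series manipulation for the semigroup-theoretic characterization from \cite{BCR}, and makes transparent why the exponential construction produces infinite divisibility rather than mere subnormality.
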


\medskip

We record the following easy observation, in part because it leads to an open question (cf. Subsection \ref{concluding}.2).

\begin{corollary}
Suppose $(\alpha_n)$ is a positive, bounded, completely alternating sequence, and let $M = \sup \alpha_n$.  Then the shift with weights $\frac{e^{\alpha_n}}{e^M}$ is moment infinitely divisible.
\end{corollary}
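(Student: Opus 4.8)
The plan is to reduce the statement entirely to Theorem \ref{main} by recognizing the new weight sequence as the exponential of a constant shift of $(\alpha_n)$. Write $\beta_n := e^{\alpha_n}/e^M$ for the weights of the shift under consideration. The first step is to check that $W_\beta$ is a contraction, so that Theorem \ref{main} is available: since $M = \sup_n \alpha_n$, we have $\alpha_n \le M$ for every $n$, whence $\beta_n = e^{\alpha_n - M} \le 1$, while positivity of $(\alpha_n)$ forces $\beta_n > 0$. Thus $0 < \beta_n \le 1$.

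Next I would pass to logarithms, which is the whole point of the construction: $\ln \beta_n = \alpha_n - M$. The key observation is that $\mathcal{CA}$ is closed under the addition of constants (recorded at the end of Section 1), since for $k \ge 1$ the difference operator annihilates constants and hence $(\nabla^k(\alpha - M))(n) = (\nabla^k \alpha)(n) \le 0$ for all $n \ge 0$. Because $(\alpha_n)$ is completely alternating by hypothesis, it follows that $(\ln \beta_n) = (\alpha_n - M) \in \mathcal{CA}$; that is, $(\beta_n)$ is log completely alternating. Combining this with the contractivity established above, Theorem \ref{main} yields at once that $W_\beta$ is MID, which is the desired conclusion.

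There is essentially no serious obstacle: the corollary is an immediate consequence of Theorem \ref{main} once one notices that the exponential reparametrization turns the completely alternating sequence $(\alpha_n)$ (of \emph{weights}, not weights squared) directly into the logarithm of a contractive weight sequence, and that subtracting the constant $M$ costs nothing in $\mathcal{CA}$. The only point deserving a moment's care is the bookkeeping distinction between a sequence being completely alternating and its being log completely alternating: here $(\alpha_n)$ itself plays the role of the logarithm of the new weights, which is exactly why the hypothesis is naturally phrased on $(\alpha_n)$ rather than on $(\alpha_n^2)$, and why the normalizing factor $e^{-M}$ is precisely what is needed to land inside the contractive regime where Theorem \ref{main} applies.
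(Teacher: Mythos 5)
Your proposal is correct and takes essentially the same route as the paper: the paper's entire proof is the one-line observation that ``the weights of the new (contractive) shift are obviously log completely alternating,'' which is precisely what you spell out in detail. Your filled-in steps --- contractivity from $\alpha_n \le M$, the identity $\ln(e^{\alpha_n}/e^M) = \alpha_n - M$, invariance of $\mathcal{CA}$ under adding constants, and then Theorem \ref{main} --- are exactly the implicit content of that sentence.
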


\begin{proof}
The weights of the new (contractive) shift are obviously log completely alternating.
\end{proof}

\section{Complete hyperexpansivity and moment infinite divisibility} \label{complete}

We recall that a completely hyperexpansive weighted shift with weight sequence $(\alpha_n)_{n=0}^\infty$ gives rise to a subnormal weighted shift by forming a new weight sequence $\delta$ where $\delta_n = \frac{1}{\alpha_n}$ for all $n$;  further, one cannot necessarily begin with a subnormal shift, and, by taking reciprocals of the weights, generate a completely hyperexpansive shift. \ (See the discussion after \cite[Proposition 6]{At}.)

The following sheds some, but not complete, light on this result.

\begin{corollary}
Let $W_\alpha$ be a completely hyperexpansive weighted shift with positive weight sequence $(\alpha_n)_{n=0}^\infty$. \ Then the weighted shift with weight sequence $(\frac{1}{\alpha_n})_{n=0}^\infty$ is not only subnormal but moment infinitely divisible.
\end{corollary}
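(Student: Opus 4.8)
The plan is to translate complete hyperexpansivity into a statement about the moment sequence and then feed that into the MID characterization supplied by Theorem \ref{main}. First I would recall the characterization (due to Athavale, \cite{At}) that a weighted shift is completely hyperexpansive precisely when its moment sequence $(\gamma_n)$ is completely alternating, i.e. $(\nabla^k \gamma)(n) \le 0$ for all $k \ge 1$ and $n \ge 0$. Taking $k = 1$ shows $(\gamma_n)$ is nondecreasing; since $\gamma_0 = 1$ this forces $\gamma_n \ge 1$ for all $n$, so every weight satisfies $\alpha_n^2 = \gamma_{n+1}/\gamma_n \ge 1$. Consequently the reciprocal shift $W_\delta$ with $\delta_n = 1/\alpha_n$ has weights $\delta_n \le 1$, hence is a contraction, and a direct product computation gives its moments as $\gamma_n^\delta = \prod_{i=0}^{n-1}\delta_i^2 = 1/\gamma_n$.

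Next I would observe that, because all its terms are $\ge 1 > 0$, the completely alternating sequence $(\gamma_n)$ in fact lies in $\mathcal{CA}_+$. Proposition \ref{CApvsLogCA} then applies and yields $(\gamma_n) \in \Exp \mathcal{CA}$; that is, $\ln \gamma$ is completely alternating.

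Finally I would invoke the mechanism in the proof of Theorem \ref{main}. Since $W_\delta$ is contractive, it is MID if and only if $-\ln \gamma^\delta$ is completely alternating. But $-\ln \gamma^\delta = -\ln(1/\gamma) = \ln \gamma$, which we have just shown to be completely alternating. Hence $W_\delta$ is MID, and, specializing to the exponent $t = 1$, subnormal as claimed.

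I expect no serious obstacle: the argument is essentially an assembly of the completely-hyperexpansive $\Longleftrightarrow$ completely-alternating dictionary together with Proposition \ref{CApvsLogCA} and Theorem \ref{main}. The only points demanding care are the verification that $(\gamma_n)$ has strictly positive terms (so that it belongs to $\mathcal{CA}_+$ rather than merely to $\mathcal{CA}$), which is exactly what the normalization $\gamma_0 = 1$ together with monotonicity supplies, and the sign bookkeeping when passing from $\gamma$ to its reciprocal $1/\gamma$, where the two negations cancel to leave $\ln \gamma$ itself.
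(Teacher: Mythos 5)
Your proof is correct and follows essentially the same route as the paper's: complete hyperexpansivity gives $(\gamma_n)\in\mathcal{CA}_+$, Proposition \ref{CApvsLogCA} upgrades this to $\ln\gamma\in\mathcal{CA}$, the bound $\alpha_n\ge 1$ makes the reciprocal shift contractive, and the characterization underlying Theorem \ref{main} concludes. The only difference is bookkeeping: the paper passes to the weights (showing $\ln\alpha_n^2$ is completely monotone, hence $\ln(1/\alpha_n^2)$ is completely alternating) and applies Theorem \ref{main} as stated, whereas you stay at the level of moments, noting that the reciprocal shift has moments $1/\gamma_n$ so that $-\ln(1/\gamma_n)=\ln\gamma_n$, and invoke the moment-level equivalence (a contractive shift is MID if and only if $-\ln$ of its moment sequence is completely alternating) from inside the proof of Theorem \ref{main} --- a slightly more streamlined finish that avoids the paper's ``log completely monotone'' intermediary.
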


\begin{proof}
Let $(\gamma_n)$ be the moment sequence for $W_\alpha$;  since $W_\alpha$ is completely hyperexpansive, the sequence $(\gamma_n)$ is completely alternating. \ It is therefore log completely alternating. \ By a computation similar to what was done earlier, one shows that the sequence $(\alpha_n^2)$ (or, equivalently, $(\alpha_n)$) is what should naturally be called log completely monotone. It also results that each of the $\alpha_n$ must satisfy $\alpha_n \geq 1$, which is not surprising since $W_\alpha$ is an ``expansion.'' \ (A proof of this is elementary using the standard recurrence relationship for difference sequences.) \ It then follows easily that the sequence $(\frac{1}{\alpha_n^2})$ (equivalently, $(\frac{1}{\alpha_n})$) is log completely alternating and that each term is no larger than $1$, and that therefore the reciprocal shift is infinitely divisible (and therefore, of course, subnormal).
\end{proof}

Note that infinite divisibility of a subnormal shift is a necessary condition for its reciprocal shift to be completely hyperexpansive, but it is not sufficient, as shown by Athavale's example in the discussion cited above (this is just the third Agler shift, which we know is infinitely divisible).  Thus there is some proper subset of the completely alternating sequences which has some stronger property (see the condition in \cite[Proposition 6]{At}, which is another ``for all $t$'' type condition). Weights in $\Exp \mathcal{CA}$ is sufficient for infinite divisibility of the shift, but we do not know what more the weights in the more restrictive class $\mathcal{CA}$ tells you about the shift itself, nor do we know what even more the weights in this (yet) more restrictive Athavale class tells you about the shift itself.

Observe in passing that there is no hope of something like infinite divisibility for completely hyperexpansive shifts. \ Recall that the weights of a completely hyperexpansive shift are larger than $1$ (except in the trivial case of the unilateral shift), and the condition for $2$-expansivity includes
$$1 - 2 \alpha_0^2 + \alpha_0^2 \alpha_1^2 \leq 0.$$
Clearly if we change this to
$$1 - 2 {\alpha_0^{2p}} + {\alpha_0^{2p}} {\alpha_1^{2p}} \leq 0$$
there is no hope of it being satisfied for all $p > 0$. We know the set of $p$ for which it is satisfied includes $(0,1]$;  it still makes sense to ask what that set of $p$ is, and, in particular, if it is connected, just as the similar question for a subnormal operator not infinitely divisible makes good sense.

\section{Concluding remarks and open questions} \label{concluding}

We close with some remarks and questions. \

\begin{itemize}

\medskip
\item[1.] \label{item1} Notice that the sequences $\left( (\frac{n+1}{n+2})^2\right)$, $\left((\frac{n+1}{n+2})^3\right)$, $\left((\frac{n+1}{n+2})^4\right)$, $\cdots$, are not completely alternating. \ Now, if for a moment we think of completely alternating sequences as interpolated by  Bernstein functions (see \cite{BCR}), the Fa\`a di Bruno formulae \cite[Section 3.4, Theorem A, p. 137]{Com} shows that powers of the Bernstein functions are not necessarily Bernstein functions, and this suggests that powers of completely alternating sequences are not necessarily completely alternating. \ In fact, the sequence $\left((\frac{n+1}{n+2})^2\right)$ is $8$-alternating but not $9$-alternating, the sequence $\left((\frac{n+1}{n+2})^3\right)$ is $3$-alternating but not $4$-alternating, the sequence $\left((\frac{n+1}{n+2})^4\right)$ is $2$-alternating but not $3$-alternating, and the sequence $\left((\frac{n+1}{n+2})^5\right)$ is not even $2$-alternating.

\medskip
\item[2.] Consider now the weight sequence $\left(\frac{n+1}{n+2}\right)$, i.e., the Bergman weights squared. \ Since this is completely alternating it is obvious that the weight sequence $(e^{\frac{n+1}{n+2}})$ is log completely alternating and therefore the shift with weights $(e^{\frac{n+1}{n+2} - 1})$ is MID. \ However, there is evidence from \cite{Wol} that this second weight sequence is actually completely alternating (and not merely log completely alternating). \ Is this true? \ One may expand the exponential in a power series as in the proof of Proposition \ref{prop:epowergamma}, but since the powers of weights are greater than one in that series (not less than one) we may not cite the usual fact about $p$-th roots for completely alternating sequences to handle terms of degree $2$ or higher. \  In fact, such sequences are \textit{not} completely alternating (as shown in item 1 above). \ Is the result true nonetheless? \ We remark that the weight sequence $(e^{e^{\frac{n+1}{n+2}}})$ is not completely alternating (and it is log completely alternating precisely if the one considered just above is completely alternating).

\medskip
\item[3.] We know that if the weights of $W_\alpha$ are completely alternating they are log completely alternating, and so the shift is MID, but we do not know what ``extra'' results from being not only in $\mbox{\rm Exp}\,  \mathcal{CA}$ but in the more restrictive class $\mathcal{CA}_+$.

\medskip
\item[4.] There is another approach to showing that a weighted shift is subnormal, using Berger measures. \  Recall that every subnormal weighted shift $W_\alpha$ has an associated Berger measure $\mu$, a probability measure supported on $[0, \|W_\alpha\|^2]$ and satisfying
$$\gamma_n = \int_0^{\|W_\alpha\|^2} t^n d \mu(t), \hspace{.2in} n
= 0, 1, \ldots .$$
Further, if there exists such a measure matching the moments of $W_\alpha$, then the shift is subnormal. \  In \cite{CoLo} the proof of subnormality of the shift referenced in Corollary \ref{cor:Toeplitz} is by providing the Berger measure (it is a countably atomic measure with point masses at, as might be expected, $p^k$ for $k = 0,1, 2, \ldots$), and in \cite{CD} the Berger measures are provided for the shifts $S(a,b,c,d)$. \  The Berger measures for the Agler shifts, and a handful of other examples, are also known. \  It is a theme of \cite{CE} that finding such measures may be difficult:  in that paper it is obtained (using the Laplace transform) that the Berger measure of the weighted shift whose moment sequence is $\left(\sqrt[q]{\frac{1}{n+1}}\right)_{n=1}^\infty$ is $\frac{1}{\Gamma(q)} (-\ln u)^{q-1} \, du.$  This latter result is the only one of which we are aware in which the full family of Berger measures associated with a MID weighted shift (here, the Bergman shift) is known, and our characterization of the MID weighted shifts points to this new, and probably difficult, line of investigation.
\end{itemize}

\bigskip
{\bf Acknowledgment}. \ The authors wish to express their gratitude to an anonymous referee for detecting an omission in the original statement of Theorem 3.1.

\end{document}